
\documentclass{amsart}
  
\usepackage[utf8]{inputenc}
\usepackage{calc}
\usepackage{graphbox}
\usepackage[T1]{fontenc}
\usepackage{ textcomp }
  \usepackage{amsthm} 
  \usepackage{amsmath} 
  \usepackage{amssymb} 
  \usepackage{ marvosym }
  \usepackage[tmargin=0.75in,bmargin=0.75in,lmargin=0.75in,rmargin=0.75in]{geometry}
\usepackage{tikz}
\usepackage{float}
\usepackage{standalone}
 \usepackage{graphicx}
  \graphicspath{{pictures/}}
  \usepackage{xspace}
  \usepackage[all]{xy}
  \usepackage{pinlabel}
  \usepackage{enumerate}
  \usepackage[font=small,labelfont=bf]{caption}
  \usepackage{wrapfig}
\usepackage{lipsum}
\usepackage[percent]{overpic}
\usepackage{caption}
\usepackage{subcaption}
\usepackage{stackrel}
\usepackage{accents}

\newlength{\dhatheight}

 \newcommand\scalemath[2]{\scalebox{#1}{\mbox{\ensuremath{\displaystyle #2}}}} 

 \usepackage{hyperref}
  
  \hypersetup{
    colorlinks=true,
    linkcolor=blue,
    filecolor=magenta,      
    urlcolor=cyan,
}



  \newcommand{\calA}{\mathcal{A}}
  
  \newcommand{\calC}{\mathcal{C}}

  \newcommand{\calF}{\mathcal{F}}
  \newcommand{\calG}{\mathcal{G}}

  \newcommand{\calL}{\mathcal{L}}
  \newcommand{\calM}{\mathcal{M}}
  \newcommand{\calN}{\mathcal{N}}
  
  \newcommand{\calP}{\mathcal{P}}

  \newcommand{\calT}{\mathcal{T}}


  \newcommand{\HH}{\mathbb{H}}

  \newcommand{\NN}{\mathbb{N}}
  
  \newcommand{\PP}{\mathbb{P}}
  
  \newcommand{\RR}{\mathbb{R}}

  \newcommand{\ZZ}{\mathbb{Z}}




  \newtheorem{theorem}{Theorem}[section]
  \newtheorem{proposition}[theorem]{Proposition}
  \newtheorem{corollary}[theorem]{Corollary}
  \newtheorem{lemma}[theorem]{Lemma}

  \theoremstyle{definition}
  
  \newtheorem{claim}[theorem]{Claim}
  
  \newtheorem*{claim*}{Claim}

  \newtheorem*{question*}{Question}
  \newtheorem*{answer*}{Answer}
  \newtheorem*{application*}{Application}

  \newtheorem{remark}[theorem]{Remark}
  \newtheorem*{remark*}{Remark}
  


  \newcommand{\secref}[1]{Section~\ref{#1}}
  \newcommand{\thmref}[1]{Theorem~\ref{#1}}
  \newcommand{\corref}[1]{Corollary~\ref{#1}}
  \newcommand{\lemref}[1]{Lemma~\ref{#1}}
  \newcommand{\propref}[1]{Proposition~\ref{#1}}
  \newcommand{\clmref}[1]{Claim~\ref{#1}}

  \newcommand{\figref}[1]{Figure~\ref{#1}}

  \newcommand{\eqnref}[1]{Equation~\eqref{#1}}


  \DeclareMathOperator{\arccosh}{arccosh}
  \DeclareMathOperator{\arcsinh}{arcsinh}


  \newcommand{\emul}{\mathrel{\ooalign{\raisebox{1.4\height}{$\mkern0.2mu\scriptstyle\ast$}\cr\hidewidth$\asymp$\hidewidth\cr}}}
  
  \newcommand{\lmul}{\stackrel{{}_\ast}{\prec}}
  \newcommand{\eadd}{\mathrel{\ooalign{\raisebox{1.6\height}{$\mkern0.2mu\scriptscriptstyle+$}\cr\hidewidth$\asymp$\hidewidth\cr}}}
  
  \newcommand{\ladd}{\stackrel{{}_+}{\prec}}
  
  \newcommand{\emuladd}{\mathrel{\ooalign{\raisebox{1.4\height}{$\mkern1.2mu\scriptstyle\ast$}\cr\hidewidth$\asymp$\hidewidth\cr\raisebox{-1\height}{$\mkern0.2mu\scriptscriptstyle+$}\cr\hidewidth}}}

  \newcommand{\lmuladd}{\mathrel{\ooalign{\raisebox{1.4\height}{$\mkern1.2mu\scriptstyle\ast$}\cr\hidewidth$\prec$\hidewidth\cr\raisebox{-1\height}{$\mkern0.2mu\scriptscriptstyle+$}\cr\hidewidth}}}


  


   
   

   
   
   
   

   

  \newcommand{\MF}{\ensuremath{\mathcal{MF}}\xspace} 
   
  \newcommand{\ML}{\ensuremath{\mathcal{ML}}\xspace} 
   
  \newcommand{\EL}{\ensuremath{\mathcal{EL}}\xspace} 
    



  \newcommand{\Teich}{{Teichm\"uller }} 
  \hyphenation{geo-desics} 
  



  \newcommand{\param}{{\mathchoice{\mkern1mu\mbox{\raise2.2pt\hbox{$
  \centerdot$}}
  \mkern1mu}{\mkern1mu\mbox{\raise2.2pt\hbox{$\centerdot$}}\mkern1mu}{
  \mkern1.5mu\centerdot\mkern1.5mu}{\mkern1.5mu\centerdot\mkern1.5mu}}}


  \begin{document}


  \title[Masur's criterion does not hold in the Thurston metric]   
  {Masur's criterion does not hold in the Thurston metric}
  

\author   {Ivan Telpukhovskiy}
 \address{Leonhard Euler International Mathematical Institute, 14th Line 29B, Vasilyevsky Island, Saint Petersburg, 199178, Russia}
 \email{ivantelp@math.toronto.edu}

\begin{abstract}  We construct a counterexample for an analogue of Masur's criterion in the setting of \Teich space equipped with the Thurston metric. For that, we find a minimal, filling, non-uniquely ergodic lamination $\lambda$ on the seven-times punctured sphere with uniformly bounded annular projection distances. Then we show that a geodesic in the corresponding \Teich space that converges to $\lambda$, stays in the thick part for the whole time.
\end{abstract}
  
\maketitle

\section{Introduction}
\label{section: introduction}

The Thurston metric is an asymmetric Finsler metric on \Teich space 
that was first introduced by Thurston in \cite{Th86}. The distance between marked hyperbolic surfaces $X$ and $Y$ is defined as the log of the infimum over the Lipschitz constants of maps from $X$ to $Y$, homotopic to the identity. Thurston showed that when $S$ has no boundary, the distance can be computed by taking the ratios of the hyperbolic lengths of the geodesic representatives of simple closed curves (s.c.c.):
\begin{equation}
\label{eqn: Th}
    d_{Th}(X,Y) = \sup_{\alpha \text{\,\,- s.c.c.}} \log \frac{\ell_{\alpha}(Y)}{\ell_{\alpha}(X)}.
\end{equation}
A class of oriented geodesics for this metric called \textit{stretch paths} was introduced in \cite{Th86}. Given a maximal geodesic lamination $\nu$ on a hyperbolic surface $X$, a stretch path starting from $X$ is obtained by stretching the leaves of $\nu$ 
and extending this deformation to the whole surface. The stretch path is controlled by the \textit{horocyclic foliation}, obtained by foliating the ideal triangles in the complement of $\nu$ by horocyclic arcs and endowed with the transverse measure that agrees with the hyperbolic length along the leaves of $\nu$. That is, the projective class of the horocyclic foliation is invariant along the stretch path.

Thurston showed that there exists a geodesic between any two points in \Teich space equipped with this metric that is a finite concatenation of stretch path segments. In general, geodesics are not unique: the length ratio in \eqnref{eqn: Th} extends continuously to the compact space of projective measured laminations $\PP \ML(S)$ and the supremum is usually (in a sense of the word) realized on a single point which is a simple closed curve, thus leaving freedom for a geodesic. 


The following is our main theorem:

\begin{theorem}
\label{theorem: main theorem}
There are Thurston stretch paths in a \Teich space with minimal, filling, but not uniquely ergodic horocyclic foliation, that stay in the thick part for the whole time.
\end{theorem}

The theorem contributes to the study of the geometry of the Thurston metric in comparison to the better studied \Teich metric. Namely, our result is in contrast with a criterion for the divergence of \Teich geodesics in the moduli space, given by Masur:

\begin{theorem}[Masur's criterion, \cite{Mas92}]
\label{Theorem: Masur's criterion in Zorich} 
Let $\mathfrak{q}$ be a unit area 
quadratic differential on a Riemann surface $X$ in the moduli space $\calM(S)$. Suppose that the vertical foliation of $\mathfrak{q}$ is minimal but not uniquely ergodic. Then the projection of the corresponding \Teich geodesic $X_t$ to the moduli space $\calM(S)$ eventually leaves every compact set as $t \rightarrow \infty$.
\end{theorem}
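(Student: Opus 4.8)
The plan is to prove the contrapositive. Assume the \Teich geodesic $\{X_{t}\}$ does not leave every compact set, i.e. there is a compact $K\subset\calM(S)$ and times $t_{n}\to\infty$ with the image of $X_{t_{n}}$ in $\calM(S)$ lying in $K$; I will show that the vertical foliation $\calF^{v}$ of $q$ is uniquely ergodic, which together with minimality of $\calF^{v}$ contradicts the hypotheses. First, since $\calM(S)=\T(S)/\Mod(S)$ and the $\Mod(S)$--action on $\T(S)$ is properly discontinuous, I would pass to a subsequence and choose mapping classes $\phi_{n}$ so that $Y_{n}:=\phi_{n}X_{t_{n}}\to Y$ in $\T(S)$ and $q_{n}:=(\phi_{n})_{*}(g_{t_{n}}q)\to Q$, an area one quadratic differential on $Y$ (compactness of the unit area stratum over a compact part of \Teich space). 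Write $\calF^{v},\calF^{h}$ for the vertical and horizontal foliations of $q$, so that on the conformal structure $X_{t}$ underlying $g_{t}q$ one has $\Ext_{X_{t}}(\calF^{v})=e^{-2t}$ and $\Ext_{X_{t}}(\calF^{h})=e^{2t}$; then the vertical foliation of $q_{n}$ is $e^{t_{n}}(\phi_{n})_{*}\calF^{v}$ and converges in $\MF$ to the vertical foliation $\calG^{v}$ of $Q$, the horizontal foliation of $q_{n}$ is $e^{-t_{n}}(\phi_{n})_{*}\calF^{h}\to\calG^{h}$, and $\Ext_{Y}(\calG^{v})=\Ext_{Y}(\calG^{h})=1$.

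The key step is an extremal length estimate showing that transverse measures carried by the support $L$ of $\calF^{v}$ become short along $\{X_{t_{n}}\}$. If $\mu$ is any transverse measure on $L$ then $i(\mu,\calF^{v})=0$, so with respect to the flat structure $|g_{t}q|$ the measure $\mu$ has zero horizontal width and vertical width $e^{-t}i(\mu,\calF^{h})$; comparing extremal length with this flat metric gives $\Ext_{X_{t}}(\mu)\le C\,e^{-2t}i(\mu,\calF^{h})^{2}$, while Minsky's inequality $i(\mu,\calF^{h})^{2}\le\Ext_{X_{t}}(\mu)\Ext_{X_{t}}(\calF^{h})$ gives $\Ext_{X_{t}}(\mu)\ge e^{-2t}i(\mu,\calF^{h})^{2}$, with $i(\mu,\calF^{h})>0$ since $L$ is minimal and crosses $\calF^{h}$. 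Now suppose $\calF^{v}$ is \emph{not} uniquely ergodic and pick projectively distinct transverse measures $\mu_{1},\mu_{2}$ on $L$. Since $\Ext_{Y_{n}}(e^{t_{n}}(\phi_{n})_{*}\mu_{j})=e^{2t_{n}}\Ext_{X_{t_{n}}}(\mu_{j})$ stays bounded above and below, after a further subsequence $e^{t_{n}}(\phi_{n})_{*}\mu_{j}\to\tau_{j}\in\MF\setminus\{0\}$, and passing to the limit in intersection numbers gives $i(\tau_{1},\tau_{2})=\lim e^{2t_{n}}i(\mu_{1},\mu_{2})=0$, $i(\tau_{j},\calG^{v})=\lim e^{2t_{n}}i(\mu_{j},\calF^{v})=0$, and $i(\tau_{j},\calG^{h})=i(\mu_{j},\calF^{h})>0$. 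So on the thick surface $Q$ I have produced two nonzero measured foliations $\tau_{1},\tau_{2}$ that do not cross each other or $\calG^{v}$ and each of which crosses $\calG^{h}$.

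What remains is to conclude $[\mu_{1}]=[\mu_{2}]$, and this is the heart of the matter and the step I expect to be the main obstacle. The configuration on $Q$ alone is not enough — a thick surface can carry a quadratic differential with non uniquely ergodic vertical foliation — so one must use the recurrence more essentially. The plan is to upgrade the estimate $\Ext_{X_{t}}(\mu_{j})\asymp e^{-2t}$ into a genuine sequence of simple closed curves $\gamma_{n}$ on $X_{t_{n}}$ with $\Ext_{X_{t_{n}}}(\gamma_{n})\to0$: since $[\mu_{1}]\ne[\mu_{2}]$, the non trivial ergodic splitting of $\calF^{v}$, tracked through a sequence of nested train tracks carrying $L$ (equivalently a Rauzy--Veech type renormalization), should produce curves carried near $L$ whose $\calF^{v}$--intersection decays strictly faster than $e^{-t_{n}}$ while their $\calF^{h}$--intersection stays $O(e^{t_{n}})$, forcing $\Ext_{X_{t_{n}}}(\gamma_{n})\to0$ and contradicting precompactness of $\{Y_{n}\}$; matching the decay rates to the recurrence times $t_{n}$ is essentially a Diophantine statement and is where the real work lies. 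An alternative is to feed the limiting data $\calG^{v},\calG^{h},\tau_{1},\tau_{2}$ into Minsky's product regions theorem along the geodesic $\{g_{s}Q\}$, each of whose points lies within \Teich distance $|s|$ of $K$, to force $\tau_{1},\tau_{2}$ and hence $\mu_{1},\mu_{2}$ to be proportional. Either way one obtains unique ergodicity of $\calF^{v}$, completing the contrapositive.
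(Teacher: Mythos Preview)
The paper does not contain a proof of this theorem: Masur's criterion is quoted in the introduction as a known result from \cite{Masur} and serves only as background motivation for the paper's main construction. There is therefore no ``paper's own proof'' against which to compare your attempt.

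Turning to your proposal on its own merits: the overall strategy (prove the contrapositive by showing recurrence implies unique ergodicity, via pulling back by mapping classes to extract a limiting quadratic differential $Q$ on a thick surface) is the right one and matches the classical approach. Your first two paragraphs are essentially correct. However, you yourself identify the genuine gap: having produced $\tau_{1},\tau_{2}$ on $Q$ with $i(\tau_{j},\calG^{v})=0$ and $i(\tau_{1},\tau_{2})=0$ is not by itself a contradiction, since $\calG^{v}$ could a priori be non-uniquely ergodic. The two escape routes you sketch are both vague. The Rauzy--Veech/train-track route can be made to work, but the ``Diophantine matching'' you allude to is exactly the content of Masur's original argument and is not a detail one can wave through; the product-regions route does not obviously close the gap either, since Minsky's theorem controls geometry in thin parts and you are precisely in the thick part.

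A cleaner way to finish, closer to Masur's actual proof, is this: rather than passing to the limit and arguing on $Q$, work directly with the two ergodic measures $\mu_{1},\mu_{2}$ and use the splitting of $\calF^{v}$ they induce to build, for each $t$, a simple closed curve $\gamma_{t}$ whose flat length on $(X_{t},g_{t}q)$ tends to zero; this uses that the mass of one ergodic component on suitable flow boxes decays faster than the other, producing thin saddle connections. That forces $\Ext_{X_{t}}(\gamma_{t})\to 0$, contradicting recurrence to a compact set. As written, your proposal is a correct outline with the hard step left open.
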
 

\begin{remark}
The horocyclic foliation is a natural analogue of the vertical foliation in the setting of the Thurston metric, see \cite{Mir08}, \cite{CF21}.
\end{remark}

\begin{remark}
Compare \thmref{theorem: main theorem} to a result of Brock and Modami in the case of the Weil-Petersson metric on \Teich space \cite{BabBr15}: they show that there exist Weil-Petersson geodesics with minimal, filling, non-uniquely ergodic ending lamination, that are recurrent in the moduli space, but not contained in any compact subset. Hence our counterexample disobeys Masur's criterion even more than in their setting of the Weil-Petersson metric.
\end{remark}

Despite being asymmetric, and in general admitting more than one geodesic between two points, the Thurston metric exhibits some similarities to 
the \Teich metric. For example, it
differs from the \Teich metric by at most a constant in the thick part\footnote{here the constant $C(\varepsilon)$ depends on the thick part $\calT_{\varepsilon}(S)$.} and there is an analog of Minsky’s product region theorem \cite{CR07}; every Thurston geodesic between any two points in the thick part with bounded combinatorics is cobounded\footnote{for every $x,y \in \calT_{\varepsilon}(S)$ with $K-$bounded combinatorics (Defnition 2.2 in \cite{LRT12}), every $\calG(x,y)$ is in the $\varepsilon'(\varepsilon,K,S)-$thick part.}
\cite{LRT12}; the shadow of a Thurston geodesic to the curve graph is a reparameterized quasi-geodesic \cite{LRT15}. 

Nevertheless, the Thurston metric is quite different from the \Teich metric. For one, the identity map between them is neither bi-Lipschitz \cite{Li03}, nor a quasi-isometry \cite{CR07}. In the \Teich metric, whenever the vertical and the horizontal foliations of a geodesic have a large projection distance in some subsurface, the boundary of that subsurface gets short along the geodesic\footnote{for every $\varepsilon>0$ there exists $K$ such that $d_W(\mu_+,\mu_-)>K$ implies $\inf_t \ell_{\partial W}(\calG(t)) < \varepsilon$.} \cite{Raf05}. However, it follows from our construction that the endpoints of a cobounded Thurston geodesic do not necessarily have bounded combinatorics. The reason behind it is that a condition equivalent to a curve getting short along a stretch path that is expressed in terms of the subsurface projections of the endpoints is more restrictive than in the case of the \Teich metric \cite{Raf14}, and involves only the annular subsurface of $\alpha$
(see \thmref{Theorem: Short curves along a Thurston geodesic} for a precise and more general statement).
This 
allows us to produce our counterexample by constructing a minimal, filling, non-uniquely ergodic lamination with uniformly bounded annular subsurface projections.

The construction will be done on the seven-times punctured sphere.
First, in \secref{section: construction of the lamination} we construct a minimal, filling, non-uniquely ergodic lamination $\lambda$ using a modification of the machinery developed in \cite{LLR18}. Namely, we choose a partial pseudo-Anosov map $\tau$ supported on a subsurface $Y$ homeomorphic to the three-times punctured sphere with one boundary component. We pick a finite-order homeomorphism $\rho$, such that the subsurface $\rho(Y)$ is disjoint from $Y$, and the orbit of the subsurface $Y$ under $\rho$ fills the surface. Then we set $\varphi_r = \tau^r \circ \rho$ and provided with a sequence of natural numbers $\{r_i \}_{n=1}^{\infty}$ and a curve $\gamma_0$, define 
$$ \Phi_i = \varphi_{r_1}\circ \dotsc \circ \varphi_{r_i}, \,\,\, \gamma_i = \Phi_i(\gamma_0). $$
We show that under a mild growth condition on the coefficients $r_i$, the sequence of curves $\gamma_i$ forms a quasi-geodesic in the curve graph and converges to an ending lamination $\lambda$ in the Gromov boundary.
In \secref{section: invariant bigon track}, we introduce a $\Phi_i$-invariant bigon track and provide matrix representations of the maps $\tau$ and $\tau \circ \rho$. In \secref{section: computing intersection numbers}, we let $\gamma_0$ be a multicurve and produce coarse estimates for the intersection numbers between the pairs of multicurves in the sequence $\gamma_i$. In \secref{section: NUE}, we 
show that $\lambda$ is non-uniquely ergodic and we find all ergodic transverse measures on $\lambda$. In \secref{section: relative twisting bounds}, we prove that $\lambda$ has uniformly bounded annular subsurface projections. Finally, in \secref{section: geodesic} we show that there are Thurston stretch paths whose horocyclic foliation is $\lambda$, that stay in the thick part of the \Teich space for the whole time.


\subsection*{Acknowledgements.} 
I thank my advisor Kasra Rafi for proposing this problem and for his patient supervision. I also thank Mark Bell for developing the Flipper program, which helped to find the bigon track in \secref{section: invariant bigon track}. I thank Howard Masur, Jon Chaika, Saul Schleimer, Jason Behrstock and Leonid Monin for helpful discussions. I thank Babak Modami for reading the preprint and making valuable suggestions. I am grateful to the referee for the helpful comments and the corrections. The work is supported by Ministry of Science and Higher Education of the Russian Federation, agreement № 075–15–2022–287.

\section{Background}
\subsection{Notation.} We adopt the following notation. Given two quantities (or functions) $A$ and $B$, we write $A \asymp_{K,C} B$ if $\frac{1}{K}B-C \leqslant A \leqslant KB+C$. 
Further, unless explicitly stated, by the following notation we will mean that there are universal constants $K \geqslant 1, C \geqslant 0$ such that
\begin{itemize}
    \item $A \ladd B$ means $A \leqslant B+C$.
    \item $A \lmul B$ means $A \leqslant KB$.
    \item $A \eadd B$ means $A-C \leqslant  B \leqslant A+C$.
    \item $A \emul B$ means $\frac{1}{K}B \leqslant A \leqslant KB$.
    \item $A \lmuladd B$ means $A \leqslant KB+C$.
    \vspace{0.15em}
    
    \item $A \emuladd B$ means $\frac{1}{K}B-C \leqslant A \leqslant KB+C$.
\end{itemize}

\subsection{Curves and markings.}  Let $S = S_{g,n}$ be the oriented surface of genus $g \geqslant 0$ with $n \geqslant 0$ punctures and with negative Euler characteristic. A simple closed curve on $S$ is called \emph{essential} if it does not bound a disk or a punctured disk.
We will call a \emph{curve} on $S$ the free homotopy class of an essential 
simple closed curve on $S$.
Given two curves $\alpha$ and $\beta$ on $S$, we will denote the minimal geometric intersection number between their representatives by $i(\alpha,\beta)$. A \emph{multicurve} is 
a collection of pairwise disjoint curves on $S$.
A \emph{pants decomposition} $\calP$ on $S$ is a maximal multicurve on $S$, i.e. whose complement in $S$ is a disjoint union of three-times punctured spheres. A collection of curves $\Gamma$ is called \emph{filling} if for any curve $\beta$ on $S$: $i(\alpha, \beta)>0$ for some $\alpha \in \Gamma$.
A \emph{marking} $\mu$ on $S$ is a filling collection of curves.
The intersection number
between two collections of curves $\Gamma$ and $\Gamma'$ is defined as
$$
i(\Gamma, \Gamma') = \sum_{\gamma \in \Gamma, \, \gamma' \in \Gamma'} i(\gamma,\gamma').
$$

\subsection{Curve graph.} The \emph{curve graph} $\calC(S)$ of a surface $S$ is a graph whose vertex set $\calC_0(S)$ is the set of all curves on $S$. Two vertices $\alpha$ and $\beta$ are connected by an edge if the underlying curves realize the minimal possible geometric intersection number for two curves on $S$. This means that $i(\alpha,\beta)=0$, i.e. the curves are disjoint, unless $S$ is one of the 
exceptional surfaces: if 
$S$ is the punctured torus, then $i(\alpha, \beta)=1$, and
if 
$S$ is the four-times punctured sphere, then $i(\alpha,\beta)=2$.
The curve graph is the 1-skeleton of the curve complex, introduced by Harvey in \cite{Harvey81}. The metric $d_S$ on the curve graph is induced by letting each edge have unit length. Masur and Minsky showed in \cite{MasMin99} that the curve graph is Gromov hyperbolic 
using \Teich theory.

\begin{theorem}
\label{Theorem: Curve graph is hyperbolic}\cite{MasMin99}
The curve graph $\calC(S)$ is Gromov hyperbolic.
\end{theorem}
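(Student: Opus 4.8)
The plan is to reduce the statement to a purely combinatorial fact about arcs and then exhibit, between every pair of vertices, a canonical family of paths that behave like quasi-geodesics with uniformly thin triangles; by a standard criterion this forces hyperbolicity. First I would pass from $\calC(S)$ to the \emph{arc-and-curve graph} $\AC(S)$, whose vertices are isotopy classes of essential arcs and curves (put the surface in a hyperbolic metric with geodesic boundary to make sense of arcs). The obvious inclusion $\calC(S)\hookrightarrow\AC(S)$ is coarsely Lipschitz, and the map sending an arc $\beta$ to $\bdy N(\beta\cup\bdy S)$, the boundary of a regular neighborhood, is a coarse inverse; hence $\calC(S)$ and $\AC(S)$ are quasi-isometric for all but finitely many sporadic surfaces, which one checks by hand. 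It therefore suffices to prove that $\AC(S)$, equivalently the arc graph, is Gromov hyperbolic, and there $\dS$-distance one between arcs just means they can be realized disjointly.

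Next I would introduce \emph{unicorn paths}. Fix two arcs $a,b$ in minimal position with endpoints on $\bdy S$. For each point $p$ of $a\cap b$ (and for the endpoints) form the arc $c_p$ obtained by concatenating the sub-arc of $a$ running from a chosen endpoint to $p$ with the sub-arc of $b$ running from $p$ to a chosen endpoint; after a tiny isotopy this is an embedded essential arc, a \emph{unicorn} between $a$ and $b$. Ordering the relevant points along $a$ produces a finite sequence $a=c_0,c_1,\dotsc,c_k=b$. The first key step is the elementary observation that consecutive unicorns $c_i$ and $c_{i+1}$ can be drawn disjointly, so this sequence is a genuine path $P(a,b)$ in $\AC(S)$ joining $a$ to $b$, i.e. a $1$-Lipschitz map of an interval.

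The heart of the matter is the \emph{uniform thinness of unicorn triangles}: for any three arcs $a,b,e$, every vertex of $P(a,b)$ lies within $\dS$-distance at most one of $P(a,e)\cup P(e,b)$. This is proved by directly tracking which initial segment of $a$ and which terminal segment of $b$ define a given unicorn and comparing with the unicorns built out of $e$; the bookkeeping of surgeries is where essentially all the work lies. Granting it, one invokes the standard criterion (as in \cite{Unicorn}, in the spirit of Bowditch and of Masur--Schleimer): a connected graph in which every pair of vertices is joined by a connected subgraph, these subgraphs forming $\delta$-thin triangles, is itself $\delta'$-hyperbolic with $\delta'=\delta'(\delta)$, and moreover the chosen subgraphs are uniform unparametrized quasi-geodesics. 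Applying this with the unicorn paths yields hyperbolicity of the arc graph and hence of $\calC(S)$.

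The main obstacle is precisely the thin-triangles lemma: one must set up the surgery combinatorics carefully enough to match the three families $P(a,b)$, $P(a,e)$, $P(e,b)$ vertex by vertex, which requires some care about endpoint choices and about degenerate cases where a unicorn collapses. An alternative I would consider is the original route of \cite{MM I} — construct a coarsely Lipschitz coarse projection from the thick part of \Teich space onto $\calC(S)$, establish a contraction property for the images of \Teich geodesics, and derive the thin-triangles condition from it — or the surgery-sequence argument of \cite{surgery Kasra}; I would nonetheless favor the unicorn proof for being self-contained and entirely combinatorial.
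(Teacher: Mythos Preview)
Your outline is a correct sketch of the Hensel--Przytycki--Webb unicorn argument, but note that the paper does not actually prove this theorem: it is stated as background and attributed to \cite{MM I}, with the remark that easier proofs later appeared in \cite{surgery Kasra} and \cite{Unicorn}. So there is no ``paper's own proof'' to compare against; you have supplied a proof where the paper simply cites the literature, and the approach you chose is precisely the one in the reference \cite{Unicorn} that the paper already points to.

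One small caveat worth flagging if you intend this as a complete argument: the reduction to the arc graph via $\AC(S)$ presupposes that $S$ has at least one puncture or boundary component, since otherwise there are no arcs at all. The unicorn paper handles the closed case by puncturing and comparing, and in the present paper $S=S_{0,7}$ so the issue does not arise, but your sketch as written silently assumes $\bdy S\neq\varnothing$.
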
 
Later, Bowditch gave another proof of this result and showed that the hyperbolicity constant of $\calC(S_{g,n})$ is bounded above by a function that is logarithmic in $g+n$ \cite{Bow06}. It was then shown that the hyperbolicity constant is uniformly bounded independently by Bowditch \cite{Bow14}, Aougab \cite{Aoug13}, Hensel, Przytycki, Webb \cite{HPW15}, Clay, Rafi, Schleimer \cite{CRS14}.

Although the compact annulus $\calA$ is not a surface of negative Euler characteristic, it is crucial for us to consider it and we separately define its curve graph. Let the vertices of $\calC(\calA)$ be the arcs connecting two boundary components of $\calA$, up to homotopies that fix the endpoints. Two vertices are connected by an edge of length $1$ if the underlying arcs have representatives with disjoint interiors. It is easy to check that $\calC(\calA)$ is quasi-isometric to $\ZZ$ with the standard metric, hence also Gromov hyperbolic (see Section 2.4 in \cite{MasMin00} for more details).

\subsection{Measured laminations and measured foliations.} We denote the space of \emph{geodesic laminations} on $S$ equipped with the Hausdorff topology by $\calG \calL(S)$. For the background on geodesic laminations we refer to Chapter 4 in \cite{CEG86}. 
We fix some definitions. A geodesic lamination is \emph{minimal} if it does not contain any proper sublaminations. A geodesic lamination is \emph{maximal} if it is not contained in any lamination as a proper subset. A geodesic lamination is \emph{filling} if the connected components of its complement are open disks or once punctured open disks.
A geodesic lamination is \emph{chain-recurrent} if it is in the closure of the set of multicurves in $\calG \calL(S)$.

We denote the space of \emph{measured laminations} on $S$ equipped with the weak$^*$ topology by $\ML(S)$.
For the background on measured laminations we refer to Chapter 8 in \cite{Mar16}. The \emph{stump} of a geodesic lamination 
is its maximal 
sublamination that admits a transverse measure of full support. We note that a minimal, filling geodesic lamination admits a transverse measure of full support. A geodesic
lamination is \emph{uniquely  ergodic} if it supports a unique transverse measure up to scaling. Otherwise it is \emph{non-uniquely ergodic}. 

We denote the space of \emph{projective measured laminations} on $S$ equipped with the quotient topology of $\ML(S) \setminus \{0\}$ by $\PP \ML(S)$. For a non-zero measured lamination $\eta \in \ML(S)$, we denote its projective class by $[\eta] \in \PP \ML(S)$. The intersection number $i(\cdot,\cdot)$ extends continuously to the space of measured laminations (for a further extension to the space of geodesic currents see Chapter 8 in \cite{Mar16}). We say that the intersection number between two projective measured laminations equals zero if it holds for every pair of their representatives in $\ML(S)$. 

Consider the subspace of $\PP \ML(S)$ consisting of projective measured laminations with 
minimal and filling support. Consider the quotient of this subspace by identifying the laminations that have the same support.
The resulting space equipped with the quotient subspace topology is the space of \emph{ending laminations} $\EL(S)$. Alternatively, the topology of $\EL(S)$ can be described as follows: a sequence $\{\nu_i\}$ of minimal, filling geodesic laminations converges to $\nu \in \EL(S)$ if every limit point of $\{\nu_i\}$ in $\calG \calL(S)$ contains $\nu$ as a sublamination. We refer to \cite{Ham06} for more details. Klarreich proved the following:

\begin{theorem}
\label{Theorem: Gromov boundary of the curve graph}\cite{Kl99}
The Gromov boundary of the curve graph $\calC(S)$ is homeomorphic to the space of ending laminations $\EL(S)$. If a sequence of curves $\{\nu_i \}$ is a quasi-geodesic in $\calC(S)$ that converges to $\nu \in \EL(S)$, then any limit point of $\{\nu_i \}$ in $\PP \ML(S)$ projects to $\nu$ under the forgetful map. 
\end{theorem}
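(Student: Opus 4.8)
The plan is to follow Klarreich's route through \Teich theory, reducing the whole statement to the behaviour of \Teich geodesics. The main device is the coarse systole map $\sigma\colon\calT(S)\to\calC(S)$ sending a point $X\in\calT(S)$ to one of its systoles; by the collar lemma this is coarsely well defined, coarsely Lipschitz, and coarsely surjective. Masur and Minsky proved that $\sigma$ descends to a quasi-isometry from the \emph{electrified} \Teich space $\widehat{\calT}(S)$ --- obtained from $(\calT(S),d_{\calT})$ by coning off each sublevel set $\{\,\mathrm{Ext}_\gamma\le\epsilon_0\,\}$ on which a curve $\gamma$ has small extremal length --- onto $\calC(S)$, and that \Teich geodesics map under $\sigma$ to unparametrized quasi-geodesics in $\calC(S)$. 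Hence $\partial_\infty\calC(S)=\partial_\infty\widehat{\calT}(S)$, and the problem becomes to decide which points at infinity survive the electrification and to identify them with $\EL(S)$.

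For this I would run over \Teich rays $\{X_t\}_{t\ge0}$, each encoded by a unit-area quadratic differential at $X_0$ with vertical measured foliation $F$ and underlying lamination $|F|$. If $|F|$ is minimal and filling, then $\sigma(X_t)$ is an unbounded unparametrized quasi-geodesic: every excursion of $\{X_t\}$ into a coned-off region of a curve $\gamma$ has $\sigma$-image of diameter at most $2$ (the systoles along it are equal to or disjoint from $\gamma$), while a minimal filling $|F|$ forces the systoles to genuinely leave every bounded subset of $\calC(S)$; so the ray converges at infinity. The crucial claim --- and the structural heart of the theorem --- is that this limit point depends only on the topological type $|F|\in\EL(S)$ and not on the transverse measure, i.e.\ that non-unique ergodicity is \emph{invisible} to the curve complex. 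Conversely, if $|F|$ is not minimal and filling there is a curve $\beta$ with $i(F,\beta)=0$, and the ray then stays within bounded $\widehat{\calT}$-distance of the coned-off region of $\beta$ (or of a subsurface disjoint from $|F|$), so $\sigma(X_t)$ is bounded in $\calC(S)$ and contributes no point at infinity. As every point of $\partial_\infty\calC(S)$ is realized by some such ray --- pull a $\calC(S)$-geodesic ray back to a sequence of systoles and pass to a limit of the connecting \Teich segments --- one obtains a bijection $\EL(S)\leftrightarrow\partial_\infty\calC(S)$; checking it is a homeomorphism for the quotient topology on $\EL(S)$ is then a matter of unwinding definitions together with continuity of $F\mapsto\{X_t\}\mapsto\lim\sigma(X_t)$.

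The remaining two assertions follow. If $\{\alpha_i\}$ is a quasi-geodesic in $\calC(S)$, Gromov hyperbolicity alone makes it converge to a point of $\partial_\infty\calC(S)=\EL(S)$; call its lamination $\lambda$. Let $\mu$ be any accumulation point of $\{\alpha_i\}$ in $\PML(S)$ and pass to a subsequence with $\alpha_i\to\mu$. If $|\mu|$ were not minimal and filling, pick $\beta$ with $i(\mu,\beta)=0$; realizing $\alpha_i$ as the systole of some $X_i\in\calT(S)$ and looking at a \Teich ray through the $X_i$, the non-filling case above would force $\sigma(X_i)\simeq\alpha_i$ to stay bounded in $\calC(S)$, contradicting that a quasi-geodesic is unbounded. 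Hence $|\mu|\in\EL(S)$, so $\mu$ lies in the domain of the forgetful map; and since along the subsequence $X_i\to|\mu|$ in the Thurston compactification, while $\sigma$ applied to a \Teich ray asymptotic to the $X_i$ converges in $\calC(S)$ both to the forgetful image of $\mu$ and --- because $\sigma(X_i)$ is coarsely $\alpha_i$ --- to $\lambda$, matching the two limits yields that $\mu$ projects to $\lambda$.

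The genuine obstacle is the claim flagged in the second paragraph: that the $\sigma$-image of a \Teich ray with minimal filling vertical foliation is a quasi-geodesic whose endpoint is a topological invariant of that foliation, so that distinct projective transverse measures supported on one minimal filling lamination land at a single point of $\partial_\infty\calC(S)$. This is exactly why $\EL(S)$ must be defined as a quotient of a subset of $\PML(S)$ by the forgetful map rather than as a subspace, and proving it genuinely uses the thick--thin structure of \Teich geodesics (Rafi's estimates, or Masur's original ones). A more combinatorial alternative, which I would fall back on if those estimates proved unwieldy, replaces \Teich geodesics by splitting sequences of maximal birecurrent train tracks: a quasi-geodesic $\{\alpha_i\}$ is carried by such a sequence, the lamination carried by every track in the sequence is forced to be minimal and filling, and any $\PML$-limit of the $\alpha_i$ is carried by every track and hence has that lamination as its support.
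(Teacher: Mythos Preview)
The paper does not prove this theorem; it is quoted in the background section with a citation to Klarreich and used as a black box. Your sketch is essentially an outline of Klarreich's original argument (systole map, electrified \Teich space, \Teich rays with minimal filling vertical foliation), together with the train-track alternative due to Hamenst\"adt, so there is no in-paper proof to compare it against.
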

We denote the space of \emph{measured foliations} on $S$ equipped with the weak$^*$ topology by $\MF(S)$. For the background on measured foliations we refer to \cite{FLP12}.
The spaces $\MF(S)$ and $\ML(S)$ are canonically identified, and we will sometimes not distinguish between measured laminations and measured  foliations; similarly for their projectivizations $\PP \ML(S)$ and $\PP \MF(S)$.

\subsection{\Teich space and Thurston boundary.} A \emph{marked hyperbolic surface} is a complete finite-area Riemannian surface of constant curvature $-1$ with a fixed homeomorphism from the underlying topological surface $S$. Two marked hyperbolic surfaces $X$ and $Y$ are called equivalent if there is an isometry between $X$ and $Y$ in the correct homotopy class. The collection of equivalence classes of marked hyperbolic surfaces is called the \emph{\Teich space} $\calT(S)$ of the surface $S$.
By $\ell_{\alpha}(X)$ we denote the hyperbolic length of the unique geodesic representative of the curve $\alpha$ on the surface $X$. For $\varepsilon > 0$, the \emph{$\varepsilon$-thick part} $\calT_{\varepsilon}(S)$ of the \Teich space is the set of all marked hyperbolic surfaces with no curves shorter than $\varepsilon$. 
A \emph{Bers constant} of $S$ is a number $B(S)$ such that for every $X \in \calT(S)$, there exist a pants decomposition on $X$ such that the length of each curve in it is at most $B(S)$.
We recall that the \Teich space can be compactified via the \emph{Thurston boundary} homeomorphic to $\PP \ML(S)$ so that the compactification is homeomorphic to the closed ball of dimension $6g-6+2n$. For the details of the construction using the space of geodesic currents in the case of a closed surface we refer to Chapter 8 in \cite{Mar16}.

\subsection{Mapping class group.}
\label{subsection: mapping class group.}
The \emph{mapping class group} of a surface $S$ is the group of the isotopy classes of orientation-preserving self-homeomorphisms of $S$. The mapping class group acts continuously on the space of projective measured laminations $\PP \ML(S)$. A non-periodic element of the mapping class group that has no invariant multicurves is called \emph{pseudo-Anosov}. A pseudo-Anosov mapping class has exactly two fixed points in $\PP \ML(S)$ that represent a pair of transverse measured laminations that are minimal, filling and uniquely ergodic. Moreover, given a pseudo-Anosov mapping class $\Psi$, there is a number $\lambda_{\Psi} > 1$ such that 
\begin{equation}
\label{eq: pA}
\Psi(\nu^u) = \lambda_{\Psi} \nu^u, \,\,\, \Psi(\nu^s) = \lambda_{\Psi}^{-1} \nu^s.
\end{equation}
The (classes of the) laminations $\nu^{u,s}$ in \eqnref{eq: pA} are called the \emph{unstable} and \emph{stable} laminations of $\Psi$, respectively. We refer to \cite{FLP12}, \cite{FM12} for 
more background on pseudo-Anosov homeomorphisms.   

\subsection{Subsurface projections.}
\label{subsection: subsurface projections}
By a \emph{subsurface} $Y \subset S$ we mean the isotopy class of a proper, closed, connected, embedded subsurface, such that its boundary consists of curves 
on $S$ and its punctures agree with those of $S$. Whenever we talk about curves or laminations on $Y$, we think of the boundary components of $Y$ as punctures. 
We allow $Y$ to be an annular subsurface, whose core curve is a curve on $S$.
We assume $Y$ is not a three-times punctured sphere.

The subsurface projection is a map $\pi_{Y}: \calG \calL(S) \rightarrow 2^{\calC_{0}(Y)}$ from the space of geodesic laminations on $S$ to the power set of the vertex set of the curve graph of $Y$. 
Equip $S$ with a hyperbolic metric. 
Let $\widetilde{Y}$ be the Gromov compactification of the cover of $S$ corresponding to the subgroup $\pi_1(Y)$ of $\pi_1(S)$ with the hyperbolic metric pulled back from $S$. There is a natural homeomorphism from to $\widetilde{Y}$ to $Y$, allowing to identify the curve graphs $\calC(\widetilde{Y})$ and $\calC(Y)$. For any geodesic lamination $\nu$ on $S$, let $\widetilde{\nu}$ be the closure of the complete preimage of $\nu$ in $\widetilde{Y}$.
Suppose that $Y \subset S$ is a nonannular subsurface. An arc $\beta \in \widetilde{\nu}$ is \emph{essential} if no component of $\widetilde{Y} \setminus \beta$ has closure which is a disk. For each essential arc $\beta \in \widetilde{\nu}$, let $\calN_{\beta}$ be a regular neighborhood of $\beta \cup \partial \widetilde{Y}$. Define $\pi_Y(\nu)$ to be the union of all curves which are either curve components of $\widetilde{\nu}$ or curve components of $\partial \calN_{\beta}$, where $\beta$ is an essential arc in $\widetilde{\nu}$. If $Y \subset S$ is an annular subsurface, define $\pi_Y(\nu)$ to be the union of all arcs $\beta$ in $\widetilde{\nu}$ that connect two boundary components of $\widetilde{Y}$.

We say that a lamination $\nu$ intersects the subsurface $Y$ \emph{essentially} if $\pi_{Y}(\nu)$ is non-empty. The projection distance between two laminations $\nu, \nu' \in \calG \calL(S)$ that intersect  $Y$ essentially is 
$$ d_{Y} (\nu, \nu') = \text{diam}_{\calC(Y)} (\pi_{Y}(\nu) \cup \pi_{Y}(\nu')).$$
If $Y$ is an annular subsurface with the core curve $\alpha$, we will write $d_{\alpha}(\nu, \nu')$ instead of $d_{Y}(\nu, \nu')$ for convenience (when the quantity makes sense). More generally, if $\Gamma$ is a collection of laminations, we define $\pi_Y(\Gamma) = \cup_{\nu \in \Gamma} \, \pi_Y(\nu)$ and denote by $d_{Y}(\Gamma)$ the quantity $\text{diam}_{\calC(Y)}(\pi_Y(\Gamma))$. We say that a collection of laminations $\Gamma$ intersects the subsurface $Y$ \emph{essentially} if $\pi_{Y}(\Gamma)$ is non-empty. Similarly, if $\Gamma, \Gamma'$ are collections of laminations that intersect $Y$ essentially, we define $d_{Y}(\Gamma, \Gamma') = \text{diam}_{\calC(Y)}(\pi_Y(\Gamma) \cup \pi_Y(\Gamma'))$. A collection of subsurfaces $\Gamma$ is called \emph{filling} if for any  $\nu \in \calG \calL(S)$ there is $Y \in \Gamma$ such that $\pi_{Y}(\nu)$ is non-empty.

The following lemma provides an upper bound for a subsurface projection distance in terms of intersection numbers.
\begin{lemma}[\cite{Hem01}, Lemma 2.1; \cite{MasMin00}, Section 2.4]
\label{lemma: subsurface projection upper bound}
If $Y \subset S$ is a subsurface or $Y=S$, and $\alpha, \beta$ are curves on $S$ that intersect $Y$ essentially, then
$$d_{Y}(\alpha, \beta) \leqslant 2i(\alpha, \beta)+2.$$
If $Y$ is an annular subsurface the above bound holds with multiplicative and additive factors $1$.
\end{lemma}
We state the \emph{Bounded geodesic image theorem} proved by Masur and Minsky in \cite{MasMin00}.
\begin{theorem}
\label{Theorem: BGI}\cite{MasMin00}
Given a surface $S$ there is a constant $M = M(S)$ such that whenever $Y$ is a subsurface and $g = \{\gamma_i\}_{i \in I}$ is a geodesic in $\calC(S)$ such that $\gamma_i$ intersects $Y$ essentially for all $i \in I$, then $d_Y(g) \leqslant M$.
\end{theorem}
Later, Webb proved that the value of $M$ can be chosen to be independent of the surface \cite{Webb15}. We state a corollary of \thmref{Theorem: BGI}, which follows from the stability of quasi-geodesics in Gromov hyperbolic spaces (Theorem 1.7, Chapter III.H in \cite{BH99}):
\begin{corollary}
\label{corollary: BGI}
Given $k \geqslant 1, c \geqslant 0$ and a surface $S$, there exists a constant $A = A(k, c, S)$ such that the following holds. Let $\{\gamma_i \}_{i \in I}$ be a $(k,c)$-quasi-geodesic in $\calC(S)$ which is also $1$-Lipschitz and let $Y$ be a subsurface of $S$. If every $\gamma_i$ intersects $Y$ essentially, then for every $i, j \in I$:
$$ d_Y(\gamma_i,\gamma_j) \leqslant A.$$
\end{corollary}

We say that two subsurfaces $Y, Z$ are \emph{overlapping} if the multicurve $\partial Y$ intersects $Z$ essentially and the multicurve $\partial Z$ intersects $Y$ essentially. The following relationship between subsurface projection distances was found in \cite{Behr06} and an elementary proof with explicit constants was later obtained in \cite{Man13}:

\begin{theorem}[Behrstock inequality]
\label{Theorem: Behrstock inequality}
If $Y,Z \subset S$ are overlapping subsurfaces and $\alpha$ is a lamination that intersects both of them essentially, then
$$
d_{Y} (\alpha, \partial Z) \geqslant 10 \Longrightarrow d_{Z} (\alpha, \partial Y) \leqslant 4.
$$
\end{theorem}

We also state a useful lemma on the convergence of the projection distances (we note that the definition of the projection distance in \cite{BabBr15} is slightly different from ours, but this only results in a bounded change of the additive error compared to their statement).
\begin{lemma}[\cite{BabBr15}, Lemma 2.7]
\label{Lemma: Convergence of projection distances} 
Suppose that a sequence of curves $\{\nu_i \}$ converges to a lamination $\nu$ in the Hausdorff topology on $\calG \calL(S)$. Let $Y$ be a 
subsurface, so that $\nu$ intersects $Y$ essentially. Then for any geodesic lamination $\nu'$ that intersects $Y$ essentially we have 
$$
d_{Y}(\nu_i, \nu') \eadd_{\,8} d_{Y}(\nu, \nu')
$$
for all $i$ sufficiently large.
\end{lemma} 
Finally, we state the following proposition:
\begin{proposition}[\cite{Min00}, p. 121-122]
\label{Prop: pA}
Let $\nu$ be the unstable or stable lamination of a pseudo-Anosov map $\Psi$ on a surface $S$ and let $\Gamma$ be a collection of curves on $S$. Then there is a constant $C_{\Psi, \Gamma} > 0$ such that if $Y \subset S$ is a subsurface such that $\Gamma$ intersects $Y$ essentially, then 
$$
d_Y(\nu, \Gamma) \leqslant C_{\Psi, \Gamma}.
$$
\end{proposition}

\subsection{Relative twisting.}
\label{subsection: relative twisting}
In \secref{subsection: subsurface projections}, the projection distances between laminations for the annular subsurfaces were defined. Here we extend the definition to allow us to compute projection distances between a lamination and a 
point in \Teich space, and between two 
points in \Teich space. We will refer to any of these quantities as the relative twisting about a curve $\alpha$. 


Suppose $\alpha$ is a curve, $X$ is a point in \Teich space and $\nu$ is a geodesic lamination on $S$. Suppose that $\nu$ intersects $\alpha$ essentially. Consider the Gromov compactification of the annular cover $X_{\alpha}$ that corresponds to the cyclic subgroup $\langle\alpha\rangle$ in the fundamental group $ \pi_1(S)$, with the hyperbolic metric pulled back from $X$. Consider the complete preimage $\widetilde{\nu}$ of $\nu$ in $X_{\alpha}$. Let $\alpha^{\bot}$ be a geodesic arc in $X_{\alpha}$ that is perpendicular to the geodesic in the homotopy class of the core curve. Define $d_{\alpha}(X,\nu)$ to be the maximal distance between $\widetilde{\omega}$ and $\alpha^{\bot}$ in $\calC(X_{\alpha})$, where $\widetilde{\omega}$ is any arc of $\widetilde{\nu}$ that connects two boundary components of $X_{\alpha}$ and $\alpha^{\bot}$ is any perpendicular. We refer to Section 3 in \cite{Min96} for another way to measure the amount that a lamination twists around a curve in a hyperbolic surface using the projection of lifts in the universal cover. We note that the quantity in their definition differs from ours by at most $2$. 


Lastly, we  define $d_{\alpha} (X, Y)$, where $X, Y$ are two points in \Teich space. Let $S_{\alpha}$ be the compactification of the annular cover that corresponds to $\alpha$. Let $X_{\alpha},Y_{\alpha}$ be the compactified covers with the hyperbolic metrics defined as before. Using the first metric, construct a geodesic arc $\alpha_{X}^{\bot}$, perpendicular to the geodesic in the homotopy class of the core curve. Similarly, construct a geodesic arc $\alpha_{Y}^{\bot}$. Define $d_{\alpha} (X, Y)$ to be the maximal distance between $\alpha_{X}^{\bot}$ and $\alpha_{Y}^{\bot}$ in $\calC(S_{\alpha})$, over all possible choices of the perpendiculars.


\subsection{Thurston metric on \Teich space.}
\label{subsecion: Thurston metric}
We assume that $S$ has no boundary. For a background on the Thurston metric we refer to \cite{Th86} and \cite{PapThe07}, while here we mention the necessary notions and state the results that we will use.

In \cite{Th86}, Thurston showed that the best Lipschitz constant is realized by a homeomorphism from $X$ to $Y$. Moreover, there is a unique largest chain-recurrent lamination $\Lambda(X,Y)$, called the \emph{maximally stretched lamination}, such that any map from $X$ to $Y$ realizing the infimum in \eqnref{eqn: Th}, multiplies the arc length along the lamination by the factor of $e^{d_{Th}(X,Y)}$. Generically, $\Lambda(X,Y)$ is a curve (\cite{Th86}, Section 10).

For a maximal lamination $\nu$, Thurston constructed a homeomorphism  $\calF_{\nu} : \calT(S) \rightarrow \MF(\nu)$, where $\MF(\nu)$ is the subspace of measured foliations transverse to $\nu$ and standard near the cusps (the latter means that every puncture has a neighborhood in which the leaves are homotopic to that puncture and the transverse measure of a (non-compact) arc going out to a cusp is infinite). The image of a point $X$ in the \Teich space under $\calF_{\nu}$ is the horocyclic foliation of the pair $(X,\nu)$. The space $\MF(\nu)$ has a natural cone structure given by the \emph{shearing coordinates} which produce an embedding $s_{\nu}: \calT(S) \rightarrow \RR^{\dim \calT(S)}$ such that the image is an open convex cone. We refer to \cite{Bon96}, \cite{The14} for the details of the construction. We assume that $\nu$ is not an ideal triangulation of $S$. The stretch paths 
form open rays from the origin in the image of $s_{\nu}$. Namely, given any $X$ in \Teich space $\calT(S)$, a maximal lamination $\nu$, and $t \in \RR$, we let $\text{stretch}(X, \nu, t)$ be a unique point in $\calT(S)$, such that
$$
s_{\nu} (\text{stretch}(X, \nu, t)) = e^t s_{\nu}(X).
$$
Every stretch path 
converges to the projective class of the horocyclic foliation in the Thurston boundary as $t \to \infty$ (\cite{Pap91}, Theorem 5.1). 
Every stretch path such that the stump of $\nu$ is uniquely ergodic converges to the projective class of the stump of $\nu$ as $t \to -\infty$ \cite{Th07}. 
We summarize these results in one theorem.
\begin{theorem}[\cite{Pap91},\cite{Th07}]
\label{theorem: stretch paths}
Suppose that $\nu$ is a maximal lamination on $S$ that is not an ideal triangulation. The stretch path $\text{stretch}(X,\nu,t)$ 
converges to the projective class of the horocyclic foliation $[\calF_{\nu}(X)]$ in the Thurston boundary as $t \rightarrow \infty$. 
Every stretch path $\text{stretch}(X,\nu,t)$ such that  $\text{stump}(\nu)$ is uniquely ergodic converges to the projective class of the stump $[\text{stump}(\nu)]$ in the Thurston boundary as $t \to -\infty$.
\end{theorem}
\subsection{Twisting parameter along a Thurston geodesic.} 
We introduce the notions necessary to state \thmref{Theorem: Short curves along a Thurston geodesic}.
We say that a curve $\alpha$ \emph{interacts} with a lamination $\nu$ if $\alpha$ is a leaf of $\nu$ or if $\nu$ intersects $\alpha$ essentially. 
We call $[a,b]$ the $\varepsilon$-\emph{active interval} for $\alpha$ along a Thurston geodesic $\calG(t)$ if $[a,b]$ is the maximal interval such that $\ell_{\alpha}(a)=\ell_{\alpha}(b)=\varepsilon$.
We use the notation $\text{Log}(x)=\max(1,\log(x))$. Denote $X_t = \calG(t)$.

\begin{theorem}[\cite{DLRT20}, Theorem 3.1] 
\label{Theorem: Short curves along a Thurston geodesic} 
There exists a constant $\varepsilon_0>0$ such that the following statement holds. Let $X,Y \in \calT_{\varepsilon_0}(S)$ and $\alpha$ be a curve that interacts with $\Lambda(X,Y)$. Let $\calG$ be any geodesic from $X$ to $Y$ and $\ell_{\alpha} = \min_t \ell_{\alpha}(t)$. Then
$$ d_{\alpha}(X,Y)
\emuladd
\frac{1}{\ell_{\alpha}} \text{Log} \frac{1}{\ell_{\alpha}}. $$
If $\ell_{\alpha} < \varepsilon_0$, then $d_{\alpha} (X,Y) \eadd d_{\alpha} (X_a, X_b)$, where $[a,b]$ is the $\varepsilon_0$-active interval for $\alpha$. Further, for all sufficiently small $\ell_{\alpha}$, the relative twisting $d_{\alpha}(X_t, \Lambda(X,Y))$ is uniformly bounded for all $t \leqslant a$ and $\ell_{\alpha}(t) \emul e^{t-b} \ell_{\alpha}(b)$ for all $t \geqslant b$. All errors in this statement depend only on $\varepsilon_0$.
\end{theorem}
\begin{remark}
\label{remark: short curves along a Thurston geodesic}
We note that the statement of \thmref{Theorem: Short curves along a Thurston geodesic} remains true if the condition $X,Y \in \calT_{\varepsilon_0}(S)$ is replaced with the weaker condition $\ell_{\alpha}(X),\ell_{\alpha}(Y) \geqslant \varepsilon_0$. The proof is identical. This will be crucial for us to make \corref{corollary: bounded annular combinatorics implies cobounded}.
\end{remark}

\section{Construction of the lamination}
\label{section: construction of the lamination}

In this section we construct 
a quasi-geodesic $\{\alpha_i\}$ in the curve graph of the seven-times punctured sphere $S_{0,7}$ converging to the ending lamination $\lambda$ in the Gromov boundary. We thank the referee for suggesting simpler proofs.
\subsection{Alpha sequence}
\label{Subsection: alpha sequence} 

Denote by $S = S_{0,7}$ the seven-times punctured sphere, obtained by doubling a regular heptagon on the plane along its boundary.  
Consider four curves $\alpha_0, \alpha_1, \alpha_2, \alpha_3$ on $S$ as shown in \figref{figure with alphas}.
\begin{figure}[ht]
  \includestandalone[width=0.19\textwidth]{tikz_alpha0}
    \quad
  \includestandalone[width=0.19\textwidth]{tikz_alpha1}
  \quad
  \includestandalone[width=0.19\textwidth]{tikz_alpha2_delta0_delta1}
  \quad
  \includestandalone[width=0.19\textwidth]{tikz_alpha3}
  \caption{The curves $\alpha_0, \alpha_1, \alpha_2, \alpha_3$ and $\delta_0,\delta_1$ on $S$.}
  \label{figure with alphas}
\end{figure}

Let $\rho$ be the finite order homeomorphism of $S$ which is realized by the counterclockwise rotation along the angle of $\frac{6 \pi}{7}$. In other words, the map $\rho$ rotates $S$ by 3 `clicks' counterclockwise. Let $Y_0, Y_1, Y_2, Y_3$ be the subsurfaces of $S$ with the boundary curves $\alpha_0, \alpha_1, \alpha_2, \alpha_3$, respectively, and with $3$ punctures each. Denote by $\tau$ the partial pseudo-Anosov map on $S$ supported on the subsurface $Y_2$ and obtained as the composition of two half-twists $\tau = H^{-1}_{\delta_1} \circ H_{\delta_0}$ (the core curves are shown in \figref{figure with alphas}). 

For any $n \in \NN$, let $\varphi_{n} = \tau^n \circ \rho$.
Let $\{r_n\}_{n=1}^{\infty}$ be a strictly increasing sequence of natural numbers. We will impose further conditions on $\{r_n\}$ in \secref{section: computing intersection numbers}. 
Set 
\begin{equation}
\label{Eq: def Phi} 
\Phi_{i} = \varphi_{r_1} \varphi_{r_2} \dotsc \varphi_{r_{i-1}} \varphi_{r_i}.
\end{equation}

Define the curves $\alpha_i = \Phi_i(\alpha_0)$ for every $i \in \NN$. Denote by $Y_i$ the subsurface with the boundary curve $\alpha_i$ and $3$ punctures.

Observe that for any $a,b,c \in \NN$:
\begin{equation}
\begin{split}
\label{Eq: no-interac-with-pA} 
\alpha_1 = \varphi_c(\alpha_0), \alpha_2 = \varphi_b \varphi_c(\alpha_0), \alpha_3 = \varphi_a \varphi_b \varphi_c(\alpha_0); \\
Y_1 = \varphi_c(Y_0), Y_2 = \varphi_b \varphi_c(Y_0), Y_3 = \varphi_a \varphi_b \varphi_c(Y_0).
\end{split}
\end{equation}
In particular, for $i = 1,2,3$ we have that $\Phi_i(\alpha_0) = \alpha_i, \Phi_i(Y_0) = Y_i$.

We begin with the observations on the sizes of the subsurface projections between the curves in the sequence $\{ \alpha_i \}$.

\begin{claim} 
\label{Claim: Subsurface projections are big} 
There is a constant $c>0$, so that for every $i \geqslant 2$  $$d_{Y_i}(\alpha_{i-2},\alpha_{i+2}) \geqslant c \, r_{i-1} -1.$$
\end{claim} 

\begin{proof}
First we expand the expression using \eqnref{Eq: def Phi}, then simplify it by applying \eqnref{Eq: no-interac-with-pA} and using the fact that the mapping class group acts on the curve graph by isometries, and then apply the triangle inequality:

\begin{equation} 
\begin{split}
d_{Y_i}(\alpha_{i-2},\alpha_{i+2}) & = d_{\Phi_{i-2}\varphi_{r_{i-1}}  \varphi_{r_{i}}(Y_0)}(\Phi_{i-2}(\alpha_{0}),\Phi_{i-2} \varphi_{r_{i-1}}  \varphi_{r_{i}}  \varphi_{r_{i+1}}  \varphi_{r_{i+2}}  (\alpha_{0})) \\
 & =d_{Y_2}(\alpha_{0},\varphi_{r_{i-1}}(\alpha_{3}))  = d_{Y_2}(\alpha_{0}, \tau^{r_{i-1}} (\rho \alpha_{3})) \\
& \geqslant d_{Y_2}(\alpha_{0}, \tau^{r_{i-1}} (\alpha_{0})) - d_{Y_2}(\tau^{r_{i-1}} (\alpha_{0}), \tau^{r_{i-1}} (\rho \alpha_{3}))
\\
& =
d_{Y_2}(\alpha_{0}, \tau^{r_{i-1}} (\alpha_{0})) - d_{Y_2}(\alpha_{0},\rho \alpha_{3}) 
\\
& =
d_{Y_2}(\alpha_{0}, \tau^{r_{i-1}} (\alpha_{0})) - 1.
\end{split}
\end{equation}
Since the mapping class $\tau$ restricts to a pseudo-Anosov map on the surface $Y_2$, by Proposition 3.6 in \cite{MasMin99} we have $d_{Y_2}(\alpha_{0}, \tau^n (\alpha_{0})) \geqslant c \, n$ for some $c>0$, so the result follows.
\end{proof}

\begin{lemma} 
\label{Lemma: Subsurface projections are big} 
There is a constant $i_0 \in \NN$ such that for every $i_0 \leqslant i < j < k$ with $j-i \geqslant 2, k-j \geqslant 2$, the curves $\alpha_i, \alpha_k$ intersect $Y_j$ essentially and
$$d_{Y_j}(\alpha_{i},\alpha_{k}) \geqslant c \, r_{j-1} - 9.$$
\end{lemma} 

\begin{proof}
Since the sequence $\{r_n\}$ is strictly increasing, we can choose $i_0$ such that $c \, r_{i+1} - 9 \geqslant 10$ for all $i \geqslant i_0$. The proof is by induction on $n = k-i$. 

\textbf{Base: $n=4$}. It follows from \clmref{Claim: Subsurface projections are big}.

\textbf{Step.} Suppose that $k-i = n+1$. We show that the curve $\alpha_i$ intersects the subsurface $Y_j$ essentially, the case of the curve $\alpha_k$ is similar. If $j-i < 4$, it follows from \eqnref{Eq: no-interac-with-pA} together with $i(\alpha_0,\alpha_2) > 0, i(\alpha_0,\alpha_3) >0$. If $j-i \geqslant 4$, then applying the induction hypothesis to the triple $i < i+2 < j$, we obtain 
$d_{Y_{i+2}} (\alpha_i,\alpha_j) \geqslant c \, r_{i+1} - 9$. If $i(\alpha_i, \alpha_j) = 0$, then since the subsurface projection distance for the disjoint curves is at most $2$ (\cite{MasMin00}, lemma 2.2), we have $d_{Y_{i+2}} (\alpha_i,\alpha_j) \leqslant 2$, which contradicts the choice of $i_0$. Therefore, $i(\alpha_i, \alpha_j) \neq 0$ and hence the curve $\alpha_i$ intersects $Y_j$ essentially.

Now we prove that $d_{Y_j}(\alpha_{i},\alpha_{k}) \geqslant c \, r_{j-1} - 9$. By the triangle inequality, we have
$$
d_{Y_j} (\alpha_{j-2}, \alpha_{j+2}) \leqslant d_{Y_j}(\alpha_{j-2}, \alpha_i) + d_{Y_j}(\alpha_i, \alpha_k) + d_{Y_j}(\alpha_k, \alpha_{j+2}).
$$
Hence $ d_{Y_j}(\alpha_i, \alpha_k) \geqslant d_{Y_j} (\alpha_{j-2}, \alpha_{j+2}) - d_{Y_j}(\alpha_i,\alpha_{j-2}) - d_{Y_j}(\alpha_{j+2},\alpha_k)$. If $j-2-i < 2$, then $\alpha_{j-2},\alpha_i$ are disjoint and $d_{Y_j}(\alpha_i,\alpha_{j-2}) \leqslant 2$. If $j-2-i \geqslant 2$, then by the induction hypothesis we have $d_{Y_{j-2}}(\alpha_i,\alpha_j) \geqslant c \, r_{j-3} - 9 \geqslant c \, r_{i+1} - 9 \geqslant 10$. Since $\partial Y_j = \alpha_j$, by \thmref{Theorem: Behrstock inequality} we have $d_{Y_j}(\alpha_i,\alpha_{j-2}) \leqslant 4$. Similarly, $d_{Y_j}(\alpha_{j+2},\alpha_k) \leqslant 4$. Together with \clmref{Claim: Subsurface projections are big}, we obtain
$$ d_{Y_j}(\alpha_i, \alpha_k) \geqslant d_{Y_j} (\alpha_{j-2}, \alpha_{j+2}) - d_{Y_j}(\alpha_{j-2}, \alpha_i) - d_{Y_j}(\alpha_k, \alpha_{j+2}) \geqslant c \, r_{j-1} - 1 - 4 - 4 = c \, r_{j-1} - 9.$$
\end{proof}
Next, we prove the main result of the section. 

\begin{proposition}
\label{Proposition: Quasigeodesic} 
The path $\{\alpha_{i}\}$ is a quasi-geodesic in the curve graph $\calC(S)$.  
\end{proposition}

\begin{proof}
Let $i_1 \in \NN$ be such that $i_1 \geqslant i_0+1$ and $c \, r_{i+1} - 9 \geqslant M+1$ for all $i \geqslant i_1$, where $i_0$ is the constant from \lemref{Lemma: Subsurface projections are big} and $M$ is the constant from \thmref{Theorem: BGI}. We prove that if $i_1 \leqslant i < k$ with $k-i \geqslant 7d-4$ for $d \in \NN$, then $d_{S} (\alpha_i,\alpha_k) \geqslant d$.

Let $\calG$ be a geodesic between $\alpha_i$ and $\alpha_k$ in the curve graph.
By \lemref{Lemma: Subsurface projections are big} and \thmref{Theorem: BGI}, for each $j \in \{ i+2, \dotsc, k-2 \}$ there a curve $v$ in $\calG$ such that $v$ does not intersect the subsurface $Y_j$ essentially. We show that if a curve $v$ does not intersect $Y_{j}$ and $Y_{j'}$ essentially for $j,j' \in \{ i+2, \dotsc, k-2 \}$, then $|j-j'|<7$. Assume on the contrary that $|j-j'| \geqslant 7$. Observe that for every $k \in \NN$, the subsurfaces $\{ Y_k, Y_{k+1}, Y_{k+2}, Y_{k+3} \}$ fill $S$. Indeed, by \eqnref{Eq: no-interac-with-pA} it is sufficient to consider the case $k=0$, which easily follows from \figref{figure with alphas}. This observation allows us to find $m \in \NN$ with $j+2 \leqslant m \leqslant j'-2$, such that the curve $v$ intersects $Y_m$ essentially. From \lemref{Lemma: Subsurface projections are big} we know that $d_{Y_m} (\alpha_{j}, \alpha_{j'}) \geqslant c \, r_{m-1} - 9 \geqslant 10$. On the other hand, since $i(v,\alpha_{j})=i(v,\alpha_{j'})=0$, by the triangle inequality we have
$$
d_{Y_m} (\alpha_{j}, \alpha_{j'}) \leqslant d_{Y_m} (\alpha_{j}, v) + d_{Y_m} (v, \alpha_{j'}) \leqslant 2 + 2 = 4,
$$
contradiction.

For each $j \in \{i+2, \dotsc, k-2\}$ map the curve $\alpha_j$ to some vertex in $\calG$ that does not intersect $Y_j$ essentially. We have shown that this map is at most $7$-to-$1$. Also by \lemref{Lemma: Subsurface projections are big} it omits the endpoints of $\calG$, therefore if $k-i \geqslant 7d-4$, then $|\{i+2, \dotsc, k-2\}| \geqslant 7d-7$ and $d_S(\alpha_i,\alpha_k) \geqslant d$. It follows that path $\{\alpha_{i}\}$ is a quasi-geodesic.
\end{proof}

We obtain an immediate corollary from \thmref{Theorem: Gromov boundary of the curve graph}:
\begin{corollary}
\label{corollary: ending lamination} 
There is an ending lamination $\lambda$ on $S$ representing a point in the Gromov boundary of $\calC(S)$, such that
$$ \lim_{i \rightarrow \infty} \alpha_i = \lambda.$$
Furthermore, every limit point of $\{\alpha_i \}$ in $\PP \ML(S)$ defines a projective class of transverse measure on $\lambda$. 
\end{corollary}

In the remainder of the section we prove more claims about the sequence $\{\alpha_i\}$ that will be used in \secref{section: relative twisting bounds}.

Let $i_1 \in \NN$ be the constant from the proof of \propref{Proposition: Quasigeodesic}. We show:

\begin{lemma}\label{Lemma: Filling pair in 5+ steps}
For every $i_1 \leqslant i < j$ with $j-i \geqslant 5$, the curves $\alpha_i, \alpha_j$ fill $S$.
\end{lemma}

\begin{proof}
The triples $i < i+2 < j, \,\, i < i+3 < j$ satisfy the conditions of \lemref{Lemma: Subsurface projections are big}. Hence $$d_{Y_{i+2}}(\alpha_i,\alpha_j), d_{Y_{i+3}}(\alpha_i,\alpha_j) \geqslant c \, r_{i+1} - 9 \geqslant \max \{10,M+1\}.$$
If $\alpha_i, \alpha_j$ are disjoint, then $d_{Y_{i+2}}(\alpha_i, \alpha_j) \leqslant 2$, contradiction. If $d_{S}(\alpha_i,\alpha_j) =2$, let $\{\alpha_i, \alpha',\alpha_j \}$ be a geodesic in the curve graph between $\alpha_i$ and $\alpha_j$. By \thmref{Theorem: BGI}, the curve $\alpha'$ does not intersect $Y_{i+2}$ and $Y_{i+3}$ essentially. A curve that does not intersect $Y_{i+2}$ and $Y_{i+3}$ essentially is either $\alpha_{i+2}$ or $\alpha_{i+3}$: indeed, by \eqnref{Eq: no-interac-with-pA} it is enough to consider the case $i=0$, which follows from \figref{figure with alphas}. \eqnref{Eq: no-interac-with-pA} also gives $d_S(\alpha_i, \alpha_{i+2}) = d_S(\alpha_i, \alpha_{i+3}) = 2 > 1$, contradiction. Therefore the curves $\alpha_i, \alpha_j$ fill $S$.  
\end{proof}

\begin{remark}
The sequence of subsurfaces $\{ Y_i \}_{i\geqslant j}$ satisfies the conditions of Theorem 4.1 in \cite{BLMR20} for sufficiently large $j \in \NN$ with $m = 2, n = 3$. This gives another proof of \propref{Proposition: Quasigeodesic}.
\end{remark}

Let $i_0 \in \NN$ be the constant from \lemref{Lemma: Subsurface projections are big}. We show:
\begin{claim}
\label{claim: almost filling in 4 steps}
For each $i \geqslant i_0$, there is a unique curve $\beta_i$ on $S$ such that $i(\beta_i,\alpha_{i})=i(\beta_i,\alpha_{i+4})=0$. Further, $\beta_i$ is disjoint from $\alpha_{i+1}, \alpha_{i+2}$ and $\alpha_{i+3}$.
\end{claim}

\begin{proof}
Let $\beta_i$ be a curve such that $i(\beta_i,\alpha_{i})=i(\beta_i,\alpha_{i+4})=0$. By \clmref{Claim: Subsurface projections are big}, we have $d_{Y_{i+2}}(\alpha_i, \alpha_{i+4}) \geqslant c \, r_{i+1} -1  \geqslant 10$. If the curve $\beta_i$ intersects $Y_{i+2}$ essentially, we have $d_{Y_{i+2}}(\alpha_i, \alpha_{i+4}) \leqslant d_{Y_{i+2}}(\alpha_i, \beta_i)+ d_{Y_{i+2}}(\beta_i, \alpha_{i+4}) \leqslant 2+2 = 4$, contradiction.

By applying the homeomorphism $\Phi_i^{-1}$, replace the triple $\{ \beta_i, \alpha_i, \alpha_{i+4} \}$ with the triple $\{ \Phi_i^{-1}(\beta_i), \alpha_0, \varphi_{r_{i+1}}(\alpha_3) \}$ using \eqnref{Eq: no-interac-with-pA}. Denote the curve $\Phi_i^{-1}(\beta_i)$ by $\beta_0$. We proved that $\beta_0$ does not intersect $Y_2$ essentially. Together with $i(\beta_0, \alpha_0) = 0$, from \figref{figure with alphas} we have that either $\beta_0 = \alpha_1$ or $\beta_0 \subset Y_1$. Put the curves $\alpha_1, \alpha_2, \rho \alpha_3$ in minimal position and apply the homeomorphism $\tau^{r_{i+1}}$. This gives representatives of the curves $\alpha_1$ and $\varphi_{r_{i+1}}(\alpha_3)$ that are in minimal position, which shows that $i(\alpha_1, \varphi_{r_{i+1}}(\alpha_3)) >0$, therefore $\beta_0 \neq \alpha_1$. This also shows that there is a unique curve $\beta_0$ in $Y_1$ as in  \figref{figure: beta0} such that $i(\beta_0, \varphi_{r_{i+1}}(\alpha_3)) =0$. Hence the curve $\beta_i = \Phi_i(\beta_0)$ is unique. Further, the curve $\beta_0$ is disjoint from the curves $\alpha_1,\alpha_2,\alpha_3$, hence by \eqnref{Eq: no-interac-with-pA} $\beta_i$ is disjoint from $\alpha_{i+1}, \alpha_{i+2}, \alpha_{i+3}$.
\end{proof}

\begin{figure}[H]
    \centering
    \includestandalone[scale=1.25]{tikz_beta0}
    \caption{The curve $\beta_0$ on $S$.}
    \label{figure: beta0}
\end{figure}

\begin{claim}
\label{claim: almost filling in 3 steps}
For each $i \in \NN$ there are exactly three curves on $S$ that are disjoint from $\alpha_{i}$ and $\alpha_{i+3}$. For each $i \geqslant i_0+1$, only one of them intersects both $\alpha_{i-1}$ and $\alpha_{i+4}$ essentially. Further, this curve intersects $\alpha_{i+1}$ and $\alpha_{i+2}$ essentially.
\end{claim}

\begin{proof}
For the first statement, by \eqnref{Eq: no-interac-with-pA} it is sufficient to consider the case $i=0$, and it follows from \figref{figure with alphas} that these curves are $\rho^{-1}(\beta_0), \beta_0$ and $\rho^{3}(\beta_0)$. By \clmref{claim: almost filling in 4 steps}, the curves $\Phi_i(\rho^{-1}(\beta_0)) = \beta_{i-1}, \, \Phi_i(\beta_0) = \beta_i$ do not intersect essentially either $\alpha_{i-1}$ or $\alpha_{i+4}$ for $i \geqslant i_0+1$. By \clmref{claim: almost filling in 4 steps}, the curve $\Phi_i(\rho^{3}(\beta_0))$ intersects $\alpha_{i-1}$ and $\alpha_{i+4}$ essentially for $i \geqslant i_0+1$. Futher, the curve $\rho^{3}(\beta_0)$ intersects $\alpha_1$ and $\alpha_2$ essentially, hence $\Phi_i(\rho^{3}(\beta_0))$ intersects $\alpha_{i+1}$ and $\alpha_{i+2}$ essentially.
\end{proof}

\begin{claim}
\label{claim: three consecutive curves}
If a curve on $S$ is disjoint from $\alpha_{i}$ and $\alpha_{i+2}$ for some $i \in \NN$, then it is also disjoint from $\alpha_{i+1}$.
\end{claim}
\begin{proof}
By \eqnref{Eq: no-interac-with-pA} it is sufficient to consider the case $i=0$. Notice that the curve $\alpha_1$ is a boundary component of a unique subsurface which is filled by the curves $\alpha_0$ and $\alpha_2$. Therefore a curve on $S$ that is disjoint from $\alpha_0$ and $\alpha_2$, is also disjoint from $\alpha_1$, which shows the claim.
\end{proof}

\begin{claim}
\label{claim: no two consecutive curves}
For each $i \in \NN$ there is no curve on $S$ that is disjoint from $\alpha_{i+1},\alpha_{i+2}$ and intersects $\alpha_i, \alpha_{i+3}$ essentially.
\end{claim}
\begin{proof}
By \eqnref{Eq: no-interac-with-pA} it is sufficient to consider the case $i=0$. If a curve $\gamma$ on $S$ is disjoint from $\alpha_1$ and $\alpha_2$, then one of the following holds: $\gamma = \alpha_1, \gamma = \alpha_2, \gamma \subset Y_1, \gamma \subset Y_2$. If $\gamma = \alpha_1$ or $\gamma \subset Y_1$, then $\gamma$ is disjoint from $\alpha_0$, if $\gamma = \alpha_2$ or $\gamma \subset Y_2$, then $\gamma$ is disjoint from $\alpha_3$, so the result follows.  
\end{proof}

We have the following corollary:

\begin{corollary}
\label{corollary: footprint on alphas}
If a curve $\gamma$ on $S$ is disjoint from some curves in the sequence $\{\alpha_i\}_{i \geqslant i_1}$, then one of the following holds: $\gamma$ is disjoint from $5$ consecutive curves, $\gamma$ is disjoint from two curves $\alpha_i, \alpha_j$ with $j-i=3$, $\gamma$ is disjoint from $3$ consecutive curves or $\gamma$ is disjoint from $1$ curve.
\end{corollary}
\begin{proof}
Let $\ell \geqslant i_1$ be the smallest index so that $\gamma$ is disjoint from $\alpha_{\ell}$ and $r \geqslant \ell$ be the largest index so that $\gamma$ is disjoint from $\alpha_r$. By \lemref{Lemma: Filling pair in 5+ steps}, we have $r-\ell \leqslant 4$. If $r-\ell = 4$, then by \clmref{claim: almost filling in 4 steps}, $\gamma$ is disjoint from $5$ consecutive curves. If $r-\ell = 3$, then by \clmref{claim: almost filling in 3 steps}, $\gamma$ is disjoint only from $\alpha_{\ell}$ and $\alpha_r$. If $r-\ell = 2$, then by \clmref{claim: three consecutive curves}, $\gamma$ is disjoint from $3$ consecutive curves. The case $r-\ell = 1$ is impossible by \clmref{claim: no two consecutive curves}. If $r-\ell = 0$, then $\gamma$ is disjoint from $1$ curve in $\{\alpha\}_{i \geqslant i_1}$.
\end{proof}

\section{Invariant bigon track}
\label{section: invariant bigon track}

In this section, we introduce a maximal birecurrent bigon track on $S$ that is invariant under the homeomorphisms $\Phi_i$ defined in \eqnref{Eq: def Phi}. We refer the reader to \cite{PH92} for more details on train tracks and specifically to \S 3.4 in \cite{PH92} for more details on bigon tracks. 
The bigon track $T$ is shown in \figref{figure: bigon track T}:

\begin{figure}[ht]
    \centering
    \includestandalone[scale=1.25]{tikz_bigontrack_numbered}
    \caption{The bigon track $T$ with a numbering of some of its branches.}
    \label{figure:    bigon track T}
\end{figure}

The complement to $T$ in $S$ consists of $7$ punctured monogons, $3$ trigons and one bigon. The shaded region in \figref{figure: bigon of the bigon track T} shows a part of the bigon in the complement of $T$. 

\begin{figure}[ht]
    \centering
    \includestandalone[scale=1.25]{tikz_bigontrack_bigon}
    \caption{The bigon in the complement of $T$.}
    \label{figure:    bigon of the bigon track T}
\end{figure}

Let $V(T)$ be the convex cone consisting of all non-negative real assignments of weights to the branches of $T$ that satisfy the switch conditions. Pick the ordered subset of 9 branches of $T$ as in \figref{figure:    bigon track T}. Notice that every non-negative assignment of weights to the chosen branches can be uniquely promoted to a vector in $V(T)$. Denote by $e_i$ the vector in $V(T)$ that assigns the weight $1$ to the $i$-th branch ($i=1,\dotsc,9$) and the weight $0$ to all other branches in the chosen set. It follows that $V(T)$ is the non-negative orthant in the vector space $W(T)$ of all real assignments of weights to the branches of $T$ (that satisfy the switch conditions) with basis $e_1, \dotsc, e_9$.

The dimension of the space of measured laminations on $S$ is equal to $8$, and the natural map from $V(T)$ to $\ML(S)$ is not injective because $T$ has a bigon. Namely, we can show:

\begin{claim}
\label{claim: bigon}
The space of measured laminations carried by $T$ is naturally identified with the linear quotient cone $V'(T) = V(T)\,/\sim\,$, where for $\mu_1,\mu_2 \in V(T)$ we let $\mu_1 \sim \mu_2$ when $\mu_1 - \mu_2 \in \text{span}(2e_2-2e_4+e_6-e_8+e_9) \subset W(T)$.
\end{claim}

\begin{proof}
According to Proposition 3.4.1 in \cite{PH92} and since $\dim V(T) - \dim \ML(S) = 1$, it is sufficient to find two distinct vectors $v_1, v_2 \in V(T)$ that correspond to the same measured lamination. Indeed, it then follows that vectors $\mu_1, \mu_2 \in V(T)$ correspond to the same measured lamination if and only if $\text{span}(\mu_1 - \mu_2) = \text{span}(v_1-v_2) \subset W(T)$. Consider $v_1 = 4e_2+2e_6+2e_9$ and $v_2 = 4e_4+2e_8$. We leave it for the reader to verify that both of them correspond to the curve in \figref{figure: ambivalent curve}.
\begin{figure}[ht]
    \centering
    \includestandalone[scale=1.25]{tikz_ac}
    \caption{A curve on $S$ that can be represented as a vector in $V(T)$ in two different ways.}
    \label{figure: ambivalent curve}
\end{figure}
\end{proof}

\begin{proposition}
\label{Proposition: invariant bigon track} 
The bigon track $T$ is $\Phi_i$-invariant.
\end{proposition} 

\begin{proof}
It is enough to check that $T$ is invariant under the mapping classes $\tau$ and $\tau \circ \rho$. We refer to \figref{figure: The action of tau on T} and \figref{figure: The action of rho followed by tau on T} for the verification.

\begin{figure}[H]
  \includestandalone[width=0.27\textwidth]{tikz_bigontrack}
    \hspace*{.5in}
  \includestandalone[width=0.27\textwidth]{tikz_bigontrack_tau}
  \caption{The action of $\tau$ on $T$.}
  \label{figure: The action of tau on T}

\end{figure}

\begin{figure}[H]
  \includestandalone[width=0.27\textwidth]{tikz_bigontrack}
    \hspace*{.5in}
  \includestandalone[width=0.27\textwidth]{tikz_bigontrack_rho}
   \hspace*{.5in}
  \includestandalone[width=0.27\textwidth]{tikz_bigontrack_rhotau}
  \caption{The action of $\rho$ followed by $\tau$ on $T$.}
  \label{figure: The action of rho followed by tau on T}

\end{figure}
\end{proof}

Denote by $A$ the matrix of the induced action of $\tau$ on the cone $V(T)$ in the basis $\{e_1, \dotsc, e_n\}$. Similarly, denote by $B$ the matrix of the induced action of $\tau \circ \rho$ on the cone $V(T)$ in the same basis. We show:

\begin{proposition}
\label{Proposition: matrices A and B}
The matrices $A$ and $B$ are as follows:

\begin{center}

$$
A=
 \left(\begin{array}{@{}ccccccccc@{}}
    2 & 1 & 0 & 0 & 1 & 0 & 1 & 2 & 0 \\
    1 & 1 & 0 & 0 & 0 & 1 & 0 & 1 & 0 \\
    0 & 0 & 1 & 0 & 0 & 0 & 0 & 0 & 0 \\
    0 & 0 & 0 & 1 & 0 & 0 & 0 & 0 & 0 \\
    0 & 0 & 0 & 0 & 1 & 0 & 0 & 0 & 0 \\
    0 & 0 & 0 & 0 & 0 & 1 & 0 & 0 & 0 \\
    0 & 0 & 0 & 0 & 0 & 0 & 1 & 0 & 0 \\
    0 & 0 & 0 & 0 & 0 & 0 & 0 & 1 & 0 \\
    0 & 0 & 0 & 0 & 0 & 0 & 0 & 0 & 1
  \end{array}\right) 
\,\,\,\,\,B=
 \left(\begin{array}{@{}cc
 ccccccc@{}}
    0 & 0 & 0 & 0 & 1 & 0 & 0 & 0 & 0 \\
    0 & 0 & 0 & 1 & 0 & 0 & 0 & 0 & 0 \\
    1 & 0 & 0 & 0 & 0 & 0 & 0 & 0 & 0 \\
    0 & 1 & 0 & 0 & 0 & 0 & 0 & 0 & 0 \\
    0 & 0 & 1 & 0 & 0 & 0 & 0 & 0 & 0 \\
    0 & 0 & 0 & 1 & 0 & 0 & 0 & 0 & 1 \\
    0 & 0 & 0 & 0 & 1 & 0 & 1 & 0 & 0 \\
    0 & 0 & 0 & 0 & 0 & 1 & 0 & 0 & 0 \\
    0 & 0 & 0 & 0 & 0 & 0 & 0 & 1 & 0
  \end{array}\right)
$$ 
Further, the vector $v = (\phi,1,0,0,0,0,0,0,0)^T$ is an eigenvector of $A$ with the eigenvalue $\phi^2$, where $\phi = \frac{1+\sqrt{5}}{2}$.
\end{center}

\end{proposition}

\begin{proof}
Let $w_i = 2e_i$. The matrices $A$ and $B$ do not change if expressed in the basis $\{w_1, \dotsc, w_9\}$. It is sufficient to find the images of the vectors $w_i$, $i = 1, \dotsc, 9$. We refer to \figref{figure: The action of tau and rho followed by tau on e_1}, \figref{figure: The action of tau and rho followed by tau on e_2}, \figref{figure: The action of tau and rho followed by tau on e_3}, \figref{figure: The action of tau and rho followed by tau on e_4}, \figref{figure: The action of tau and rho followed by tau on e_5}, \figref{figure: The action of tau and rho followed by tau on e_6}, \figref{figure: The action of tau and rho followed by tau on e_7},
\figref{figure: The action of tau and rho followed by tau on e_8},
\figref{figure: The action of tau and rho followed by tau on e_9} and leave the verification for the reader. Finally, the vector $v = (\phi,1,0,0,0,0,0,0,0)^T$ corresponds to the unstable lamination of $\tau$ on $S$.

\begin{figure}[H]
  \includestandalone[width=0.21\textwidth]{tikz_1}
    \hspace*{.5in}
  \includestandalone[width=0.21\textwidth]{tikz_21+2}
   \hspace*{.5in}
  \includestandalone[width=0.21\textwidth]{tikz_3}
  \caption{The curve corresponding to the vector $w_1$ and its images under $\tau$ and $\tau \circ \rho$, respectively.}
  \label{figure: The action of tau and rho followed by tau on e_1}
\end{figure}

\begin{figure}[H]
  \includestandalone[width=0.21\textwidth]{tikz_2}
    \hspace*{.5in}
  \includestandalone[width=0.21\textwidth]{tikz_1+2}
   \hspace*{.5in}
  \includestandalone[width=0.21\textwidth]{tikz_4}
  \caption{The curve corresponding to the vector $w_2$ and its images under $\tau$ and $\tau \circ \rho$, respectively.}
  \label{figure: The action of tau and rho followed by tau on e_2}
\end{figure}

\begin{figure}[H]
  \includestandalone[width=0.21\textwidth]{tikz_3}
    \hspace*{.5in}
  \includestandalone[width=0.21\textwidth]{tikz_3}
   \hspace*{.5in}
  \includestandalone[width=0.21\textwidth]{tikz_5}
  \caption{The curve corresponding to the vector $w_3$ and its images under $\tau$ and $\tau \circ \rho$, respectively.}
  \label{figure: The action of tau and rho followed by tau on e_3}
\end{figure}

\begin{figure}[H]
  \includestandalone[width=0.21\textwidth]{tikz_4}
    \hspace*{.5in}
  \includestandalone[width=0.21\textwidth]{tikz_4}
   \hspace*{.5in}
  \includestandalone[width=0.21\textwidth]{tikz_2+6}
  \caption{The curve corresponding to the vector $w_4$ and its images under $\tau$ and $\tau \circ \rho$, respectively.}
  \label{figure: The action of tau and rho followed by tau on e_4}
\end{figure}

\begin{figure}[H]
  \includestandalone[width=0.21\textwidth]{tikz_5}
    \hspace*{.5in}
  \includestandalone[width=0.21\textwidth]{tikz_1+5}
   \hspace*{.5in}
  \includestandalone[width=0.21\textwidth]{tikz_1+7}
  \caption{The curve corresponding to the vector $w_5$ and its images under $\tau$ and $\tau \circ \rho$, respectively.}
  \label{figure: The action of tau and rho followed by tau on e_5}
\end{figure}

\begin{figure}[H]
  \includestandalone[width=0.21\textwidth]{tikz_6}
    \hspace*{.5in}
  \includestandalone[width=0.21\textwidth]{tikz_2+6}
   \hspace*{.5in}
  \includestandalone[width=0.21\textwidth]{tikz_8}
  \caption{The curve corresponding to the vector $w_6$ and its images under $\tau$ and $\tau \circ \rho$, respectively.}
  \label{figure: The action of tau and rho followed by tau on e_6}
\end{figure}

\begin{figure}[H]
  \includestandalone[width=0.21\textwidth]{tikz_7}
    \hspace*{.5in}
  \includestandalone[width=0.21\textwidth]{tikz_1+7}
   \hspace*{.5in}
  \includestandalone[width=0.21\textwidth]{tikz_7}
  \caption{The curve corresponding to the vector $w_7$ and its images under $\tau$ and $\tau \circ \rho$, respectively.}
  \label{figure: The action of tau and rho followed by tau on e_7}
\end{figure}

\begin{figure}[H]
  \includestandalone[width=0.21\textwidth]{tikz_8}
    \hspace*{.5in}
  \includestandalone[width=0.21\textwidth]{tikz_21+2+8}
   \hspace*{.5in}
  \includestandalone[width=0.21\textwidth]{tikz_9}
  \caption{The curve corresponding to the vector $w_8$ and its images under $\tau$ and $\tau \circ \rho$, respectively.}
  \label{figure: The action of tau and rho followed by tau on e_8}
\end{figure}

\begin{figure}[H]
  \includestandalone[width=0.21\textwidth]{tikz_9}
    \hspace*{.5in}
  \includestandalone[width=0.21\textwidth]{tikz_9}
   \hspace*{.5in}
  \includestandalone[width=0.21\textwidth]{tikz_6}
  \caption{The curve corresponding to the vector $w_9$ and its images under $\tau$ and $\tau \circ \rho$, respectively.}
  \label{figure: The action of tau and rho followed by tau on e_9}
\end{figure}
\end{proof}

\section{Estimating the intersection numbers}
\label{section: computing intersection numbers}

Let $\gamma_0$ be the multicurve on $S$ that corresponds to the vector $w_1+w_3 \in V(T)$ as in \figref{figure: gamma0_new}. Define the multicurves $\gamma_i = \Phi_i(\gamma_0)$. In this subsection we will coarsely estimate the intersection numbers between pairs of multicurves in the sequence $\{\gamma_i\}$. To state the result we introduce some notation.


\begin{figure}[H]
    \centering
    \includestandalone[scale=1.25]{tikz_gamma_0_new}
    \caption{The multicurve $\gamma_0$ on $S$.}
    \label{figure: gamma0_new}
\end{figure}

Let $f_0=0, f_1=1, f_{n} = f_{n-1} + f_{n-2}$ for $n \geqslant 2$ be the Fibonacci sequence. Define the numbers $c_i = 2f_{2r_i-2}$ for $i \geqslant 1$. We assume that the sequence $\{r_n\}$ satisfies
\begin{equation}
    \label{eq: condition on r_n}
    c_1 \neq 0, \,\,\,\,\, \sum_{i=1}^{\infty} \frac{c_{i}}{c_{i+1}} < \infty.
\end{equation}
We prove:

\begin{proposition}
\label{Prop: Intersection numbers}

There is a constant $i_2 \in \NN$ such that for $i_2 \leqslant i < j$ with odd $j-i$, the following holds: 
$$
i(\gamma_{i-1},\gamma_{j}) \, \emul \, i(\gamma_{i},\gamma_{j}) \, \emul \, c_{i+1}c_{i+3} \dotsc c_{j}.
$$
The multiplicative constants are independent of $i$ and $j$.
\end{proposition}

To prove this proposition we will study the asymptotic behavior of the matrix products involving matrices $A$ and $B$ from \propref{Proposition: matrices A and B}. We start with elementary observations about Fibonacci sequence.


\begin{claim}
\label{claim: basic fibonacci 1}
For $m \geqslant 0$, the following holds:
$2f_{m+1}+f_m = f_{m+3}, \,\, 2f_{m+1}+f_{m}+f_{m+2} = f_{m+4}$.
\end{claim}
\begin{proof}
We have:
$$
2f_{m+1}+f_m = f_{m+1}+f_{m+1}+f_m=f_{m+1}+f_{m+2}=f_{m+3}.
$$
$$
2f_{m+1}+f_{m}+f_{m+2} = f_{m+3}+f_{m+2} = f_{m+4}.
$$
\end{proof}
Let $\phi = \frac{1+\sqrt{5}}{2}$ be the golden ratio. 
\begin{claim}
\label{claim: basic fibonacci 2}
For $m \geqslant 1$, the following holds:
$$
\phi^{-2}f_{2m} - \phi^{-2m} = f_{2m-2}, \,\,\,\, \phi^{-1} f_{2m} - \phi^{-2m} = f_{2m-1}, \,\,\,\, \phi f_{2m} + \phi^{-2m} = f_{2m+1}, \,\,\,\, \phi^2 f_{2m} + \phi^{-2m} = f_{2m+2}.
$$
\end{claim}
\begin{proof}
By Binet's formula, we have $f_{2m} = \frac{\phi^{2m} - \psi^{2m}}{\sqrt{5}}$, where $\psi = \frac{1-\sqrt{5}}{2}$. Since $\psi^2 = \phi^{-2}$, we have $f_{2m} = \frac{\phi^{2m} - \phi^{-2m}}{\sqrt{5}}$. Since $\phi^2-\phi^{-2}=\phi-\phi^{-1}=\sqrt{5}$, we have

$$
\phi^{-2}f_{2m} - \phi^{-2m} = \frac{\phi^{2m-2} - \phi^{-2m-2}-\phi^{-2m}\sqrt{5}}{\sqrt{5}} = \frac{\phi^{2m-2} -\phi^{-2m}(\phi^{-2}+\sqrt{5})}{\sqrt{5}}  = \frac{\phi^{2m-2} - \phi^{-2m+2}}{\sqrt{5}} = f_{2m-2}.
$$

$$
\phi^{-1} f_{2m} - \phi^{-2m} =  \frac{\phi^{2m-1} - \phi^{-2m-1}-\phi^{-2m}\sqrt{5}}{\sqrt{5}} = \frac{\phi^{2m-1} -\phi^{-2m}(\phi^{-1}+\sqrt{5})}{\sqrt{5}}  = \frac{\phi^{2m-1} - \phi^{-2m+1}}{\sqrt{5}} = f_{2m-1}.
$$

$$
\phi f_{2m} + \phi^{-2m} = \frac{\phi^{2m+1} - \phi^{-2m+1}+\phi^{-2m}\sqrt{5}}{\sqrt{5}} = \frac{\phi^{2m+1} -\phi^{-2m}(\phi-\sqrt{5})}{\sqrt{5}}  = \frac{\phi^{2m+1} - \phi^{-2m-1}}{\sqrt{5}}  = f_{2m+1}.
$$

$$
\phi^2 f_{2m} + \phi^{-2m} = \frac{\phi^{2m+2} - \phi^{-2m+2}+\phi^{-2m}\sqrt{5}}{\sqrt{5}} = \frac{\phi^{2m+2} -\phi^{-2m}(\phi^2-\sqrt{5})}{\sqrt{5}}  = \frac{\phi^{2m+2} - \phi^{-2m-2}}{\sqrt{5}}  = f_{2m+2}.
$$
\end{proof}


Next, we show: 

\begin{claim}
\label{claim: matrix A^n}
For $n \geqslant 1$, the matrix $A^n$ is as follows: 

$$
A^n=
 \left(\begin{array}{@{}ccccccccc@{}}
    f_{2n+1} & f_{2n} & 0 & 0 & f_{2n} & f_{2n-1}-1 & f_{2n} & f_{2n+2}-1 & 0 \\
   f_{2n} & f_{2n-1} & 0 & 0 & f_{2n-1}-1 & f_{2n-2}+1 & f_{2n-1}-1 & f_{2n+1}-1 & 0 \\
    0 & 0 & 1 & 0 & 0 & 0 & 0 & 0 & 0 \\
    0 & 0 & 0 & 1 & 0 & 0 & 0 & 0 & 0 \\
    0 & 0 & 0 & 0 & 1 & 0 & 0 & 0 & 0 \\
    0 & 0 & 0 & 0 & 0 & 1 & 0 & 0 & 0 \\
    0 & 0 & 0 & 0 & 0 & 0 & 1 & 0 & 0 \\
    0 & 0 & 0 & 0 & 0 & 0 & 0 & 1 & 0 \\
    0 & 0 & 0 & 0 & 0 & 0 & 0 & 0 & 1
  \end{array}\right) 
$$
\end{claim}

\begin{proof}
The proof is by induction.

\textbf{Base: $n=1$}. It holds since $f_2 = 1, f_3=2, f_4=3$.

\textbf{Step.} Using \clmref{claim: basic fibonacci 1}, we calculate:

$$
A^{n+1}=A^n \cdot A = $$
$$
\scalemath{0.9}{\left(\begin{array}{@{}ccccccccc@{}}
    2f_{2n+1}+f_{2n} & f_{2n+1}+f_{2n} & 0 & 0 & f_{2n+1}+f_{2n} & f_{2n}+f_{2n-1}-1 & f_{2n+1}+f_{2n} & 2f_{2n+1}+f_{2n}+f_{2n+2}-1 & 0 \\
   2f_{2n}+f_{2n-1} & f_{2n}+f_{2n-1} & 0 & 0 & f_{2n}+f_{2n-1}-1 & f_{2n-1}+f_{2n-2}+1 & f_{2n}+f_{2n-1}-1 & 2f_{2n}+f_{2n-1}+f_{2n+1}-1 & 0 \\
    0 & 0 & 1 & 0 & 0 & 0 & 0 & 0 & 0 \\
    0 & 0 & 0 & 1 & 0 & 0 & 0 & 0 & 0 \\
    0 & 0 & 0 & 0 & 1 & 0 & 0 & 0 & 0 \\
    0 & 0 & 0 & 0 & 0 & 1 & 0 & 0 & 0 \\
    0 & 0 & 0 & 0 & 0 & 0 & 1 & 0 & 0 \\
    0 & 0 & 0 & 0 & 0 & 0 & 0 & 1 & 0 \\
    0 & 0 & 0 & 0 & 0 & 0 & 0 & 0 & 1
  \end{array}\right) }
$$

$$
=\left(\begin{array}{@{}ccccccccc@{}}
    f_{2n+3} & f_{2n+2} & 0 & 0 & f_{2n+2} & f_{2n+1}-1 & f_{2n}+2 & f_{2n+4}-1 & 0 \\
   f_{2n+2} & f_{2n+1} & 0 & 0 & f_{2n+1}-1 & f_{2n2}+1 & f_{2n+1}-1 & f_{2n+3}-1 & 0 \\
    0 & 0 & 1 & 0 & 0 & 0 & 0 & 0 & 0 \\
    0 & 0 & 0 & 1 & 0 & 0 & 0 & 0 & 0 \\
    0 & 0 & 0 & 0 & 1 & 0 & 0 & 0 & 0 \\
    0 & 0 & 0 & 0 & 0 & 1 & 0 & 0 & 0 \\
    0 & 0 & 0 & 0 & 0 & 0 & 1 & 0 & 0 \\
    0 & 0 & 0 & 0 & 0 & 0 & 0 & 1 & 0 \\
    0 & 0 & 0 & 0 & 0 & 0 & 0 & 0 & 1
  \end{array}\right) 
$$
\end{proof}

\begin{corollary}
\label{corollary: matrix A^nB}
For $n \geqslant 1$, the matrix $A^nB$ is as follows: 
$$
A^nB=\left(\begin{array}{@{}ccccccccc@{}}
    0 & 0 & f_{2n} & f_{2n+1}-1 & f_{2n+2} & f_{2n+2}-1 & f_{2n} & 0 & f_{2n-1}-1 \\
   0 & 0 & f_{2n-1}-1 & f_{2n}+1 & f_{2n+1}-1 & f_{2n+1}-1 & f_{2n-1}-1 & 0 & f_{2n-2}+1 \\
    1 & 0 & 0 & 0 & 0 & 0 & 0 & 0 & 0 \\
    0 & 1 & 0 & 0 & 0 & 0 & 0 & 0 & 0 \\
    0 & 0 & 1 & 0 & 0 & 0 & 0 & 0 & 0 \\
    0 & 0 & 0 & 1 & 0 & 0 & 0 & 0 & 1 \\
    0 & 0 & 0 & 0 & 1 & 0 & 1 & 0 & 0 \\
    0 & 0 & 0 & 0 & 0 & 1 & 0 & 0 & 0 \\
    0 & 0 & 0 & 0 & 0 & 0 & 0 & 1 & 0
  \end{array}\right) 
$$
\end{corollary}
\begin{proof}
Direct check.
\end{proof}

\begin{claim}
\label{claim: expressing A^nB}
For $n \geqslant 1$, the matrix $A^nB$ can be expressed as follows:

\begin{equation}
\label{Eq: expressing A^nB} 
A^nB = 2f_{2n} N + M + \phi^{-2n}L,
\end{equation}
where 
$$
N =\left(\begin{array}{@{}ccccccccc@{}}
    0 & 0 & 1/2 & \phi/2 & \phi^2/2 & \phi^2/2 & 1/2 & 0 & 1/2\phi \\
   0 & 0 & 1/2\phi & 1/2 & \phi/2 & \phi/2 & 1/2\phi & 0 & 1/2\phi^2 \\
    0 & 0 & 0 & 0 & 0 & 0 & 0 & 0 & 0 \\
    0 & 0 & 0 & 0 & 0 & 0 & 0 & 0 & 0 \\
    0 & 0 & 0 & 0 & 0 & 0 & 0 & 0 & 0 \\
    0 & 0 & 0 & 0 & 0 & 0 & 0 & 0 & 0 \\
    0 & 0 & 0 & 0 & 0 & 0 & 0 & 0 & 0 \\
    0 & 0 & 0 & 0 & 0 & 0 & 0 & 0 & 0 \\
    0 & 0 & 0 & 0 & 0 & 0 & 0 & 0 & 0
  \end{array}\right) \,\,\,\,\,
 M = \left(\begin{array}{@{}ccccccccc@{}}
    0 & 0 & 0 & -1 & 0 & -1 & 0 & 0 & -1 \\
   0 & 0 & -1 & 1 & -1 & -1 & -1 & 0 & 1 \\
    1 & 0 & 0 & 0 & 0 & 0 & 0 & 0 & 0 \\
    0 & 1 & 0 & 0 & 0 & 0 & 0 & 0 & 0 \\
    0 & 0 & 1 & 0 & 0 & 0 & 0 & 0 & 0 \\
    0 & 0 & 0 & 1 & 0 & 0 & 0 & 0 & 1 \\
    0 & 0 & 0 & 0 & 1 & 0 & 1 & 0 & 0 \\
    0 & 0 & 0 & 0 & 0 & 1 & 0 & 0 & 0 \\
    0 & 0 & 0 & 0 & 0 & 0 & 0 & 1 & 0
  \end{array}\right) 
  $$
  $$
  L = \left(\begin{array}{@{}ccccccccc@{}}
     0 & 0 & 0 & 1 & 1 & 1 & 0 & 0 & -1 \\
     0 & 0 & -1 & 0 & 1 & 1 & -1 & 0 & -1 \\
    0 & 0 & 0 & 0 & 0 & 0 & 0 & 0 & 0 \\
    0 & 0 & 0 & 0 & 0 & 0 & 0 & 0 & 0 \\
    0 & 0 & 0 & 0 & 0 & 0 & 0 & 0 & 0 \\
   0 & 0 & 0 & 0 & 0 & 0 & 0 & 0 & 0 \\
   0 & 0 & 0 & 0 & 0 & 0 & 0 & 0 & 0 \\
    0 & 0 & 0 & 0 & 0 & 0 & 0 & 0 & 0 \\
    0 & 0 & 0 & 0 & 0 & 0 & 0 & 0 & 0
 \end{array}\right)
$$
Further, the following holds:
\begin{enumerate}
    \item $N^2=0$ and $\text{rk}\,(N) =1$.
    \item $(MN)^2 = MN$ and $MN(v)=v$ for $v = (0,0,\phi,1,0,0,0,0,0)^T$.
    \item $(NM)^2 = NM$.
    \item $NL=LN=0$.
    \item $L^2=0$.
\end{enumerate}

\end{claim}

\begin{proof}
\eqnref{Eq: expressing A^nB} holds by \corref{corollary: matrix A^nB} together with \clmref{claim: basic fibonacci 2}. The rest is a direct check.
\end{proof}

Let $\|\cdot\|$ denote the operator norm induced by the standard norm on $V(T)$ with basis $\{e_1, \dotsc, e_9\}$.

\begin{claim}
\label{claim: norm bounds for A^nB and A^mBA^nB}
There is a constant $C>0$ such that for $m,n \in \NN$, the following holds:
$$
\left\lVert \frac{A^nB}{2f_{2n}}-N \right\rVert \leqslant C \cdot \frac{1}{f_{2n}}, \,\,\,\,\,
\left\lVert \frac{A^mBA^nB}{2f_{2n}} - MN \right\rVert \leqslant C \cdot \frac{f_{2m}}{f_{2n}}.
$$
\end{claim}

\begin{proof}
By \clmref{claim: expressing A^nB}, we have
$$
\frac{A^nB}{2f_{2n}} - N = \frac{1}{2f_{2n}} (M + \phi^{-2n}L).
$$
Hence 
$$
\left\lVert \frac{A^nB}{2f_{2n}} - N \right\rVert \leqslant \frac{1}{2f_{2n}} \|M+\phi^{-2n}L\| \leqslant \frac{1}{2f_{2n}} (\|M\|+\|L\|).
$$
By \clmref{claim: expressing A^nB}, we have
\begin{equation}
\label{eq: expressing A^mBA^nB}
\begin{split}
A^mBA^nB &= (2f_{2m} N + M + \phi^{-2m}L) (2f_{2n} N + M + \phi^{-2n}L) \\&= 2f_{2n} MN + 2f_{2m} NM + M^2 + \phi^{-2m} LM + \phi^{-2n} ML.
\end{split}
\end{equation}
Hence 
$$
\frac{A^mBA^nB}{2f_{2n}} - MN = \frac{f_{2m}}{f_{2n}} \left( NM + \frac{1}{2f_{2m}}M^2+ \frac{\phi^{-2m}}{2f_{2m}} LM + \frac{\phi^{-2n}}{2f_{2m}} ML \right).
$$
Therefore
\begin{equation}
\begin{split}
\left\lVert \frac{A^mBA^nB}{2f_{2n}} - MN \right\rVert &\leqslant  \frac{f_{2m}}{f_{2n}} \left( \|NM\| + \frac{1}{2f_{2m}}\|M\|^2+ \frac{\phi^{-2m}}{2f_{2m}} \|LM\| + \frac{\phi^{-2n}}{2f_{2m}} \|ML\|  \right) \\ &\leqslant \frac{f_{2m}}{f_{2n}} \left( \|NM\| + \|M\|^2+ \|LM\| + \|ML\|  \right).
\end{split}
\end{equation}
Letting $C = \max\{(\|M\|+\|L\|)/2, \|NM\| + \|M\|^2+ \|LM\| + \|ML\| \}$ concludes the proof.
\end{proof}

Observe that the matrix $A^nB$ is the induced matrix of the homeomorphism $\varphi_{n+1}$ since $\varphi_{n+1} = \tau^{n+1} \circ \rho = \tau^{n} \circ (\tau \circ \rho)$.

Then the matrix $P_i$ defined as $P_i = A^{r_i-1}BA^{r_{i+1}-1}B$ for $i \geqslant 1$ corresponds to $\varphi_{r_i}\varphi_{r_{i+1}}$. We show:

\begin{claim}
\label{claim: matrix product norm bounds}
There are constants $C'>0$ and $j_0 \in \NN$ such that for $j_0 \leqslant j < k$ with odd $k-j$, the following holds:
$$ \left\lVert \frac{P_j}{c_{j+1}} \cdot \frac{P_{j+2}}{c_{j+3}} \cdot  \dotsc \cdot \frac{P_{k-1}}{c_{k}} - MN  \right\rVert \leqslant 
C' \cdot \sum_{i=j}^{\infty} \frac{c_{i}}{c_{i+1}},
$$
$$
\left\lVert \frac{A^{r_{j+1}-1}B}{c_{j+1}} \cdot \frac{P_{j+2}}{c_{j+3}} \cdot \frac{P_{j+4}}{c_{j+5}} \cdot  \dotsc \cdot \frac{P_{k-1}}{c_{k}} - NMN  \right\rVert \leqslant 
C' \cdot \sum_{i=j}^{\infty} \frac{c_{i}}{c_{i+1}}.
$$
\end{claim}

\begin{proof}
By the definition of $P_i$ and ${c_i}$ we have 
$$
\frac{P_i}{c_{i+1}} = \frac{A^{r_i-1}BA^{r_{i+1}-1}B}{2f_{2r_{i+1}-2}},
$$
therefore by \clmref{claim: norm bounds for A^nB and A^mBA^nB}, we get
$$
\left\lVert \frac{P_i}{c_{i+1}} - MN \right\rVert \leqslant C \cdot \frac{c_i}{c_{i+1}}.
$$
It follows from \eqnref{eq: condition on r_n} that $\sum_{i=1}^{\infty} \left\lVert \frac{P_i}{c_{i+1}} - MN \right\rVert < \infty$. Since the matrix $MN$ is idempotent by \clmref{claim: expressing A^nB}, we can invoke \lemref{Lemma: convergence of matrices} (see \eqnref{eq: general matrix norm bound}) 
to conclude that there is a constant $j_0 \in \NN$ such that for $j_0 \leqslant j < k$:

$$ \left\lVert \frac{P_j}{c_{j+1}} \cdot \frac{P_{j+2}}{c_{j+3}} \cdot  \dotsc \cdot \frac{P_{k-1}}{c_{k}} - MN  \right\rVert \leqslant 
2 \cdot \left(C \cdot \sum_{i=j}^{\infty} \frac{c_{i}}{c_{i+1}}\right) \cdot \|MN\|^2.
$$

Together with the triangle inequality and the first inequality in \clmref{claim: norm bounds for A^nB and A^mBA^nB}, we obtain:

\begin{equation}
\begin{split}
\left\lVert \frac{A^{r_{j+1}-1}B}{c_{j+1}} \cdot \frac{P_{j+2}}{c_{j+3}} \cdot  \dotsc \cdot \frac{P_{k-1}}{c_{k}} - NMN  \right\rVert \leqslant \left\lVert \frac{A^{r_{j+1}-1}B}{c_{j+1}} \cdot \frac{P_{j+2}}{c_{j+3}}  \cdot  \dotsc \cdot \frac{P_{k-1}}{c_{k}} - \frac{A^{r_{j+1}-1}B}{c_{j+1}}MN  \right\rVert +
\\
+ \left\lVert \frac{A^{r_{j+1}-1}B}{c_{j+1}}MN - NMN  \right\rVert
\leqslant
\left(\|N\|+ \frac{2C}{c_{j+1}} \right) \cdot 2 \cdot \left(C\cdot\sum_{i=j}^{\infty} \frac{c_{i}}{c_{i+1}}\right) \cdot \|MN\|^2 + \frac{2C}{c_{j+1}} \cdot \|MN\|.
\end{split}
\end{equation}
Letting $C' = (\|N\|+2C)\cdot2C\|MN\|^2 +  2C\|MN\| $ concludes the proof.
\end{proof}

We prove the main result of the section:

\begin{proof}[Proof of \propref{Prop: Intersection numbers}]

Using \eqnref{Eq: def Phi}, we can write 

$$
i(\gamma_{i-1}, \gamma_{j}) = i ( \Phi_{i-1} (\gamma_0), \Phi_{i-1} \varphi_{r_i} \dotsc \varphi_{r_j}(\gamma_0) ) =  i ( \gamma_0, \varphi_{r_i} \dotsc \varphi_{r_j}(\gamma_0) ). 
$$

We can express the multicurve $\varphi_{r_i} \dotsc \varphi_{r_j}(\gamma_0)$ as a vector in $V (T)$ as follows: $P_i P_{i+2} \dotsc P_{j-1} (w_1+w_3)$. 
Notice that the measured lamination that corresponds to the vector $MN(w_1+w_3)=\frac{1}{2}w_3+ \frac{1}{2\phi}w_4$ is the unstable lamination of the homeomorphism $\rho \tau \rho^{-1}$, which has a positive intersection number with the curve that corresponds to $w_3$, hence also with the multicurve $\gamma_0$. Since the natural map $V(T) \to \ML(S)$ and the intersection number $i(\cdot,\cdot)$ are continuous, by \clmref{claim: matrix product norm bounds} we can choose $i_2 \in \NN$ so that for $i_2\leqslant i < j$, the intersection number of the measured lamination $\frac{P_i}{c_{i+1}} \cdot \frac{P_{i+2}}{c_{i+3}} \cdot  \dotsc \cdot \frac{P_{j-1}}{c_{j}}(w_1+w_3)$ and $\gamma_0$ is bounded above and below from zero, where the bound is independent of $i$ and $j$. Hence for $i_2 \leqslant i<j$, the intersection number $i(\gamma_{i-1},\gamma_j)$ is equal to $c_{i+1}c_{i+3} \dotsc c_{j}$ up to a fixed multiplicative constant.

Similarly, we can write
$$
i(\gamma_{i}, \gamma_{j}) = i ( \Phi_{i} (\gamma_0), \Phi_{i} \varphi_{r_{i+1}} \dotsc \varphi_{r_j}(\gamma_0) ) =  i ( \gamma_0, \varphi_{r_{i+1}} \dotsc \varphi_{r_j}(\gamma_0) ). 
$$
We can express the multicurve $\varphi_{r_{i+1}} \dotsc \varphi_{r_j}(\gamma_0)$ as a vector in $V (T)$ as follows: $A^{r_{i+1}-1}BP_{i+2} P_{i+4} \dotsc P_{j-1} (w_1+w_3)$. Notice that the measured lamination that corresponds to the vector $NMN(w_1+w_3) =  \frac{1}{2}w_1+ \frac{1}{2\phi}w_2$ is the unstable lamination of the homeomorphism $\tau$, which has a positive intersection number with the curve that corresponds to $w_1$, hence also with the multicurve $\gamma_0$. By \clmref{claim: matrix product norm bounds}, for $i_2\leqslant i < j$ the intersection number of the measured lamination $\frac{A^{r_{i+1}-1}B}{c_{i+1}}\cdot\frac{P_{i+2}}{c_{i+3}} \cdot \frac{P_{i+4}}{c_{i+5}} \cdot  \dotsc \cdot \frac{P_{j-1}}{c_{j}}(w_1+w_3)$ and $\gamma_0$ is bounded above and below from zero, where the bound is independent of $i$ and $j$. Hence for $i_2 \leqslant i<j$, the intersection number $i(\gamma_{i},\gamma_j)$ is equal to $c_{i+1}c_{i+3} \dotsc c_{j}$ up to a fixed multiplicative constant.  
\end{proof}


\section{Non-unique ergodicity}
\label{section: NUE}

In this section we show that the ending lamination $\lambda$ constructed in \secref{section: construction of the lamination} is not uniquely ergodic. Namely, we prove that the appropriately scaled subsequences of multicurves $\{\gamma_i\}$ with even and odd indices converge to non-zero measured laminations that are not multiples of each other. Further, we show that the limiting measured laminations are ergodic and are the only ergodic transverse measures on $\lambda$.

\begin{claim}
\label{claim: def even and odd}
There are $\lambda_{e}, \lambda_{o} \in \ML(S)$ such that the following holds as $n \to \infty$:
$$\frac{\gamma_{2n}}{c_2c_4\dotsc c_{2n}} \to \lambda_{e}, \,\,\,\,\, \frac{\gamma_{2n+1}}{c_1c_3\dotsc c_{2n+1}} \to \lambda_{o}.$$
\end{claim}
\begin{proof}
Notice that the vector $(\prod_{i=1}^n \frac{P_{2i-1}}{c_{2i}}) (w_1 + w_3)$ corresponds to $\frac{\gamma_{2n}}{c_2c_4\dotsc c_{2n}}$ and the vector  $\frac{A^{r_1-1}B}{c_1}(\prod_{i=1}^n  \frac{P_{2i}}{c_{2i+1}})(w_1+w_3)$ corresponds to $\frac{\gamma_{2n+1}}{c_1c_3\dotsc c_{2n+1}}$. By \eqnref{eq: condition on r_n}, \clmref{claim: norm bounds for A^nB and A^mBA^nB} and \lemref{Lemma: convergence of matrices}, the infinite products $\prod_{i=1}^\infty \frac{P_{2i-1}}{c_{2i}}$ and $\prod_{i=1}^\infty  \frac{P_{2i}}{c_{2i+1}}$ converge. Hence the vectors $(\prod_{i=1}^n \frac{P_{2i-1}}{c_{2i}}) (w_1 + w_3)$ and $\frac{A^{r_1-1}B}{c_1}(\prod_{i=1}^n  \frac{P_{2i}}{c_{2i+1}})(w_1+w_3)$ converge as $n \to \infty$, and the result follows.
\end{proof}

\begin{claim}
\label{claim: even and odd}
As $n \to \infty$,
$$
\frac{i(\gamma_{2n}, \lambda_{e})}{i(\gamma_{2n}, \lambda_{o})} \to 0, \,\,\,\,\, \frac{i(\gamma_{2n+1}, \lambda_{e})}{i(\gamma_{2n+1}, \lambda_{0})} \to \infty.
$$
\end{claim}
\begin{proof}
Suppose that $2n \geqslant i_2$ where $i_2 \in \NN$ is the constant from \propref{Prop: Intersection numbers}. Then by \propref{Prop: Intersection numbers} for $m > n $ we have:
$$
i \left(\gamma_{2n}, \frac{\gamma_{2m}}{c_2c_4\dotsc c_{2m}} \right) = \frac{i(\gamma_{2n},\gamma_{2m})}{c_2c_4\dotsc c_{2m}} \, \emul \, \frac{c_{2n+2}c_{2n+4}\dotsc c_{2m}}{c_2c_4\dotsc c_{2m}} = \frac{1}{c_2c_4\dotsc c_{2n}}.
$$
Since it holds for every $m>n$, by passing to the limit as $m \to \infty$, we have $i(\gamma_{2n}, \lambda_{e}) \, \emul \, \frac{1}{c_2c_4\dotsc c_{2n}}$. In particular, $i(\gamma_{2n}, \lambda_{e}) \neq 0$. 

Similarly, for $m > n$ we have
$$
i \left(\gamma_{2n}, \frac{\gamma_{2m+1}}{c_1c_3\dotsc c_{2m+1}} \right) = \frac{i(\gamma_{2n},\gamma_{2m+1})}{c_1c_3\dotsc c_{2m+1}} \, \emul \, \frac{c_{2n+1}c_{2n+3}\dotsc c_{2m+1}}{c_1c_3\dotsc c_{2m+1}} = \frac{1}{c_1c_3\dotsc c_{2n-1}}.
$$
Since it holds for every $m>n$, by passing to the limit as $m \to \infty$, we have $i(\gamma_{2n}, \lambda_{o}) \, \emul \, \frac{1}{c_1c_3\dotsc c_{2n-1}}$. In particular, $i(\gamma_{2n}, \lambda_{o}) \neq 0$.

Putting this together, we obtain
$$
\frac{i(\gamma_{2n}, \lambda_{e})}{i(\gamma_{2n}, \lambda_{o})} \, \emul \, \frac{c_1c_3\dotsc c_{2n-1}}{c_2c_4\dotsc c_{2n}}.
$$
It follows from \eqnref{eq: condition on r_n} that $\frac{c_{2n-1}}{c_{2n}} \to 0$ as $n \to \infty$. Hence $ \frac{c_1c_3\dotsc c_{2n-1}}{c_2c_4\dotsc c_{2n}} \to 0$, and therefore $\frac{i(\gamma_{2n}, \lambda_{e})}{i(\gamma_{2n}, \lambda_{o})} \to 0$ as $n \to \infty$.

Similarly, for $m > n$ we have:
$$
\frac{i(\gamma_{2n+1}, \frac{\gamma_{2m}}{c_2c_4\dotsc c_{2m}})}{i(\gamma_{2n+1}, \frac{\gamma_{2m+1}}{c_1c_3\dotsc c_{2m+1}})} = \frac{c_1c_3\dotsc c_{2m+1}}{c_2c_4\dotsc c_{2m}} \cdot  \frac{i(\gamma_{2n+1},\gamma_{2m})}{i(\gamma_{2n+1},\gamma_{2m+1})} \emul \frac{c_1c_3\dotsc c_{2m+1}}{c_2c_4\dotsc c_{2m}} \cdot \frac{c_{2n+2}c_{2n+4}\dotsc c_{2m}}{c_{2n+3}c_{2n+5}\dotsc c_{2m+1}} = \frac{c_1c_3\dotsc c_{2n+1}}{c_2c_4 \dotsc c_{2n}}.
$$
Since it holds for every $m>n$, by passing to the limit as $m \to \infty$, we have $\frac{i(\gamma_{2n+1},\lambda_{e})}{i(\gamma_{2n+1},\lambda_{o})} \, \emul \, \frac{c_1c_3\dotsc c_{2n+1}}{c_2c_4 \dotsc c_{2n}}$. It follows from \eqnref{eq: condition on r_n} that $\frac{c_{2n+1}}{c_{2n}} \to \infty$, hence $\frac{c_1c_3\dotsc c_{2n+1}}{c_2c_4 \dotsc c_{2n}} \to \infty$ and therefore $\frac{i(\gamma_{2n+1}, \lambda_{e})}{i(\gamma_{2n+1}, \lambda_{0})} \to \infty$ as $n \to \infty$.
\end{proof}

\begin{corollary}
\label{corollary: even and odd}
The measured laminations $\lambda_{e}, \lambda_{o}$ are non-zero and are not the multiples of each other.
\end{corollary}

\begin{proof}
It was shown in \clmref{claim: even and odd} that $i(\gamma_{2n},\lambda_{e}) \neq 0$ and $i(\gamma_{2n},\lambda_{o}) \neq 0$ for $2n\geqslant i_2$, hence $\lambda_{e}\neq 0$ and $\lambda_{o}\neq 0$.
If $\lambda_{e}, \lambda_{o}$ are multiples of each other, then the sequence $\frac{i(\gamma_{2n}, \lambda_{e})}{i(\gamma_{2n}, \lambda_{o})}$ is constant, which contradicts \clmref{claim: even and odd}.
\end{proof}

\begin{proposition}
\label{proposition: not uniquely ergodic}
The ending lamination $\lambda$ is not uniquely ergodic.
\end{proposition}
\begin{proof}
The measured lamination $\lambda_{e}$ can be expressed as $\lambda_{e} = \lambda_{e}' + \lambda_{e}''$, where $\lambda_{e}'$ is the measured lamination that corresponds to the vector $(\prod_{i=1}^{\infty} \frac{P_{2i-1}}{c_{2i}}) (w_1)$ and $\lambda_{e}''$ is the measured lamination that corresponds to the vector $(\prod_{i=1}^{\infty} \frac{P_{2i-1}}{c_{2i}}) (w_3)$. The simple closed curve that corresponds to the vector $\prod_{i=1}^n P_{2i-1} (w_1)$ is at distance $2$ from the curve $\alpha_{2n}$ in the curve graph for each $n\geqslant 1$, hence the sequence of curves $\prod_{i=1}^n P_{2i-1} (w_1), n\geqslant 1$ converges to $\lambda$ in the Gromov boundary as $n \to \infty$. Then by \thmref{Theorem: Gromov boundary of the curve graph}, the measured lamination $\lambda_{e}'$ is either supported on $\lambda$ or zero. Repeating the same argument for $\lambda_{e}''$ and since $\lambda_{e} \neq 0$ by \corref{corollary: even and odd}, we obtain that $\lambda_{e}$ is supported on $\lambda$. By a similar argument, the measured lamination $\lambda_{o}$ is supported on $\lambda$. By \corref{corollary: even and odd}, $\lambda$ is not uniquely ergodic.
\end{proof}

Let $C(\lambda)$ denote the convex cone of transverse measures supported on $\lambda$. Since the measured lamination $\lambda_{e}$ is carried by $T$, the ending lamination $\lambda$, being the support of $\lambda_{e}$, is carried by $T$. Hence every measured lamination in $C(\lambda)$ is carried by $T$. In fact, we can show more: 

\begin{claim}
\label{claim: smaller cones carry}
For every $n \in \NN$, the image of the convex cone $P_1 P_3 \dotsc P_{2n-1} (V(T))$ under the natural map to $\ML(S)$ contains $C(\lambda)$.
\end{claim}
\begin{proof}
Notice that $P_1 P_3 \dotsc P_{2n-1} (V(T))$ is isomorphic to the convex cone of the non-negative real assignments of weights satisfying the switch conditions to the branches of the train track $\varphi_{r_1} \dotsc \varphi_{r_{2n}}(T)$. It is then sufficient to show that the measured lamination $\lambda_{e}$ is carried by the train track $\varphi_{r_1} \dotsc \varphi_{r_{2n}}(T)$. Indeed, in this case every measured lamination in $C(\lambda)$ is carried by $\varphi_{r_1} \dotsc \varphi_{r_{2n}}(T)$. Since the measured lamination corresponding to the vector $(\prod_{i=n+1}^{\infty} \frac{P_{2i-1}}{c_{2i}}) (w_1 + w_3)$ is carried by $T$ by \propref{Proposition: invariant bigon track}, the measured lamination corresponding to the vector $\frac{P_1}{c_2} \frac{P_3}{c_4} \dotsc \frac{P_{2n-1}}{c_{2n}} \cdot (\prod_{i=n+1}^{\infty} \frac{P_{2i-1}}{c_{2i}}) (w_1 + w_3)$ is carried by $\varphi_{r_1} \dotsc \varphi_{r_{2n}}(T)$. Since the latter measured lamination is $\lambda_{e}$, the result follows. 
\end{proof}

To find all ergodic transverse measures on $\lambda$, we study the shapes of the convex cones $P_1P_3\dotsc P_{2n-1} (V(T))$ as $n \to \infty$. Roughly speaking, we will show that for each $n \in \NN$, the set of the generators of the cone $P_1P_3\dotsc P_{2n-1} (V(T))$ can be divided into two subsets such that the angles between pairs of generators within each of the subsets converge to zero as $n \to \infty$ (\lemref{lemma: small angles}). From this the upper bound on the number of ergodic transverse measures will follow.

Endow $V(T)$ with the standard inner product with respect to the basis $\{e_1, \dotsc, e_9\}$. We start with the following helpful observation:

\begin{claim}
\label{claim: inner product observation}
For $i \in \NN$,
$$
\langle P_i(e_1), e_1 \rangle = \frac{c_i}{2}, \,\,\, \langle P_i(e_3), e_3 \rangle = \frac{c_{i+1}}{2}.
$$ 
\end{claim}
\begin{proof}
By \eqnref{eq: expressing A^mBA^nB}, we have 
\begin{equation}
\label{eq: expressing P_i}
P_i = c_{i+1} MN + c_{i} NM + M^2 + \phi^{-2(r_{i}-1)} LM + \phi^{-2(r_{i+1}-1)} ML.
\end{equation}
Notice that $MN(e_1)=0$ since $e_1 \in \text{ker} \, N$. We have $NM(e_1) = \frac{1}{2}e_1+\frac{1}{2\phi}e_2$. It is also a direct check that $\langle M^2(e_1), e_1 \rangle = \langle LM(e_1), e_1 \rangle =\langle ML(e_1), e_1 \rangle =0$, hence $\langle P_i(e_1), e_1 \rangle = \frac{c_i}{2}$.

Similarly, we have $MN(e_3) = \frac{1}{2}e_3+\frac{1}{2\phi}e_4$ and $\langle NM(e_3), e_3 \rangle = \langle M^2(e_3), e_3 \rangle = \langle LM(e_3), e_3 \rangle = \langle ML(e_3), e_3 \rangle =0$. Hence $\langle P_i(e_3), e_3 \rangle = \frac{c_{i+1}}{2}$.
\end{proof}
Next, we prove:
\begin{claim}
\label{claim: P_1P_3...P_2n-1 norm lower bounds}
For every $n \in \NN$ the following holds. If $e_i \notin \text{ker} \, N$, then 
$$
\| P_1P_3 \dotsc P_{2n-1}MN(e_i) \| \geqslant \frac{c_2c_4 \dotsc c_{2n}}{2^{n+1}\phi}.
$$
If $e_i \in \text{ker} \, N$, then 
$$
\| P_1P_3 \dotsc P_{2n-1}NM(e_i)\| \geqslant \frac{c_1c_3 \dotsc c_{2n-1}}{2^{n+1}\phi}.
$$
\end{claim}
\begin{proof}
Notice that if $e_i \notin \text{ker} \, N$, then $\langle MN(e_i), e_3 \rangle \geqslant \frac{1}{2\phi}$. Since the matrices $P_i$ are non-negative for $i \in \NN$, we have $$\langle P_i(v) , e_3 \rangle \geqslant \langle P_i(\langle v, e_3 \rangle \cdot e_3) , e_3 \rangle $$ for all $v \in V(T)$. Applying \clmref{claim: inner product observation} $n$ times, it follows that $\langle P_1P_3 \dotsc P_{2n-1}MN(e_i), e_3 \rangle \geqslant \frac{c_2c_4 \dotsc c_{2n}}{2^{n+1}\phi}$, therefore $
\| P_1P_3 \dotsc P_{2n-1}MN(e_i) \| \geqslant \frac{c_2c_4 \dotsc c_{2n}}{2^{n+1}\phi}$. 

Similarly, if $e_i \in \text{ker} \, N$, then $\langle NM(e_i),e_1 \rangle \geqslant \frac{1}{2\phi}$. Since the matrices $P_i$ are non-negative for $i \in \NN$, together with \clmref{claim: inner product observation} it follows that $\langle P_1P_3 \dotsc P_{2n-1}NM(e_i), e_1 \rangle \geqslant \frac{c_1c_3 \dotsc c_{2n-1}}{2^{n+1}\phi}$, hence $
\| P_1P_3 \dotsc P_{2n-1}NM(e_i) \| \geqslant \frac{c_1c_1 \dotsc c_{2n-1}}{2^{n+1}\phi}$. 
\end{proof}

Let $K_i$ be the matrix defined as $K_i = M^2 + \phi^{-2(r_{i}-1)} LM + \phi^{-2(r_{i+1}-1)} ML$ for $i \in \NN$. Then 
\begin{equation}
\label{eq: expressing P_i more concisely}
P_i = c_{i+1}MN + c_{i}NM+K_i.
\end{equation}
Notice that $\|K_i\| \leqslant \|M\|^2+\|LM\|+\|ML\|$ for all $i \in \NN$.  

\begin{claim}
\label{claim: P_1P_3...P_2n-1 norm upper bounds}
There is a constant $D>0$ such that for every $n \in \NN$ and $1 \leqslant i \leqslant 9$ the following holds:
$$
\| P_1P_3 \dotsc P_{2n-1}NM(e_i) \| \leqslant D^{n+1} \cdot c_1c_3 \dotsc c_{2n-1},
$$
$$
\| P_1P_3 \dotsc P_{2n-1}MN(e_i)\| \leqslant D^{n+1} \cdot c_2c_4 \dotsc c_{2n},
$$
$$
\| P_1P_3 \dotsc P_{2n-1}K_{2n+1}(e_i)\| \leqslant D^{n+1} \cdot c_2c_4 \dotsc c_{2n}.
$$
\end{claim}
\begin{proof}
Consider the first inequality. Expressing each matrix $P_i$ in the product $P_1P_3\dotsc P_{2n-1}NM$ as in \eqnref{eq: expressing P_i more concisely} and opening up the brackets, we obtain a sum of $3^n$ matrices with coefficients. It follows from the identity $N^2=0$ that $c_1c_3 \dotsc c_{2n-1}$ is the largest coefficient in the sum which is multiplied by a non-zero matrix. Since the norm of each matrix in the sum is at most $(\sup_{i}\{\|MN\|,\|NM\|,\|K_i\|\})^{n+1}$, by the triangle inequality we have
$$
\| P_1P_3 \dotsc P_{2n-1}MN(e_i)\| \leqslant 3^n \cdot c_2c_4 \dotsc c_{2n} \cdot \left(\sup_{i} \, \{\|MN\|,\|NM\|,\|K_i\|\}\right)^{n+1}.
$$
Letting $D = 3 \cdot (\sup_{i}\{\|MN\|,\|NM\|,\|K_i\|\})$ concludes the first inequality. The second and the third inequalities follow by a similar argument and noticing that $c_2c_4 \dotsc c_{2n}$ is the largest coefficient in the corresponding sum.
\end{proof}
\begin{remark}
It is possible to obtain better upper bounds for $\|P_1P_3 \dotsc P_{2n-1}MN(e_i)\|$ and $\| P_1P_3 \dotsc P_{2n-1}K_{2n+1}(e_i)\|$ using \clmref{claim: matrix product norm bounds}, but weaker bounds will suffice for our purposes.
\end{remark}

\begin{lemma}
\label{lemma: small angles}
There is a constant $D'>0$ such that for every $n\in \NN$ the following holds. If $e_i, e_j \notin \text{ker} \, N$, then 
$$
1- \cos \angle (P_1 P_3 \dotsc P_{2n+1} (e_i), P_1 P_3 \dotsc P_{2n+1} (e_j)) \leqslant (D')^{n+1} \cdot \frac{c_1c_3\dotsc c_{2n+1}}{c_2c_4 \dotsc c_{2n+2}}.
$$
If $e_i, e_j \in \text{ker} \, N$, then 
$$
1 - \cos \angle (P_1 P_3 \dotsc P_{2n+1} (e_i), P_1 P_3 \dotsc P_{2n+1} (e_j)) \leqslant (D')^{n+1} \cdot \frac{c_2c_4\dotsc c_{2n}}{c_1c_3\dotsc c_{2n+1}}.
$$ 
Further, in either case $\angle (P_1 P_3 \dotsc P_{2n+1} (e_i), P_1 P_3 \dotsc P_{2n+1} (e_j)) \to 0$ as $n \to \infty$.
\end{lemma}

\begin{proof}
By \eqnref{eq: expressing P_i more concisely}, we can write
$$
P_1 P_3 \dotsc P_{2n+1} = P_1 P_3 \dotsc P_{2n-1} (c_{2n+2}MN+c_{2n+1}NM+K_{2n+1}).
$$
Consider the first inequality. Let 
$$v_1=c_{2n+2}P_1 P_3 \dotsc P_{2n-1}MN(e_i), \,\,\, v_2=c_{2n+2}P_1 P_3 \dotsc P_{2n-1}MN(e_j),$$ $$w_1= P_1 P_3 \dotsc P_{2n-1} (c_{2n+1}NM+K_{2n+1})(e_i), \,\,\, w_2= P_1 P_3 \dotsc P_{2n-1} (c_{2n+1}NM+K_{2n+1})(e_j).$$
Notice that $v_1+w_1 = P_1 P_3 \dotsc P_{2n+1} (e_i)$ and $v_2+w_2=P_1 P_3 \dotsc P_{2n+1} (e_j)$. We also have $\angle (v_1,v_2)=0$ since the vectors $MN(e_i)$ and $MN(e_j)$ are collinear.
By \clmref{claim: P_1P_3...P_2n-1 norm lower bounds} and \clmref{claim: P_1P_3...P_2n-1 norm upper bounds}, we have 
$$
\frac{c_2c_4 \dotsc c_{2n+2}}{2^{n+1}\phi} \leqslant \|v_1\|,\|v_2\| \leqslant D^{n+1} \cdot c_2c_4 \dotsc c_{2n+2},
$$
$$
\|w_1\|,\|w_2\|  \leqslant D^{n+1} \cdot (c_1c_3\dotsc c_{2n+1} + c_2c_4 \dotsc c_{2n})  \leqslant 2\cdot D^{n+1} \cdot c_1c_3\dotsc c_{2n+1}. 
$$

Then by \lemref{lemma: Angle Estimate Lemma}, we have
\begin{equation}
\begin{split}
& 1- \cos \angle (P_1 P_3 \dotsc P_{2n+1} (e_i), P_1 P_3 \dotsc P_{2n+1} (e_j)) \\ & \leqslant \frac{2\left( 4 \cdot D^{2n+2} \cdot c_2c_4\dotsc c_{2n+2}\cdot c_1c_3\dotsc c_{2n+1} + 4 \cdot D^{2n+2} \cdot (c_1c_3\dotsc c_{2n+1})^2 \right)}{\frac{c_2c_4 \dotsc c_{2n+2}}{2^{n+1}\phi}\cdot \frac{c_2c_4 \dotsc c_{2n+2}}{2^{n+1}\phi}}  \\ & \leqslant 2^{2n+2}\phi^2 \cdot 8 \cdot D^{2n+2} \cdot  \frac{2c_2c_4\dotsc c_{2n+2}\cdot c_1c_3\dotsc c_{2n+1}}{(c_2c_4\dotsc c_{2n+2})^2}
\\ & \leqslant 2^{2n+6}\phi^2 \cdot D^{2n+2} \cdot \frac{c_1c_3\dotsc c_{2n+1}}{c_2c_4 \dotsc c_{2n+2}}.
\end{split}
\end{equation}
For the second inequality, notice that $c_{2n+2}P_1 P_3 \dotsc P_{2n-1}MN(e_i)=c_{2n+2}P_1 P_3 \dotsc P_{2n-1}MN(e_j)=0$ since $N(e_i)=N(e_j)=0$. We let
$$v_1'=c_{2n+1}P_1 P_3 \dotsc P_{2n-1}NM(e_i), \,\,\, v_2'=c_{2n+1}P_1 P_3 \dotsc P_{2n-1}NM(e_j),$$ $$w_1'= P_1 P_3 \dotsc P_{2n-1} K_{2n+1}(e_i), \,\,\, w_2'= P_1 P_3 \dotsc P_{2n-1} K_{2n+1}(e_j).$$
Notice that $v_1'+w_1' = P_1 P_3 \dotsc P_{2n+1} (e_i)$ and $v_2'+w_2'=P_1 P_3 \dotsc P_{2n+1} (e_j)$. We also have $\angle (v_1',v_2')=0$ since the vectors $NM(e_i)$ and $NM(e_j)$ are collinear.
Then by \lemref{lemma: Angle Estimate Lemma}, \clmref{claim: P_1P_3...P_2n-1 norm lower bounds} and \clmref{claim: P_1P_3...P_2n-1 norm upper bounds}, we have
\begin{equation}
\begin{split}
& 1- \cos \angle (P_1 P_3 \dotsc P_{2n+1} (e_i), P_1 P_3 \dotsc P_{2n+1} (e_j)) \\ & \leqslant 
\frac{2\left( 2\cdot  D^{2n+2} \cdot c_1c_3 \dotsc c_{2n+1} \cdot c_2c_4 \dotsc c_{2n} +   D^{2n+2} \cdot (c_2c_4 \dotsc c_{2n})^2 \right)}{\frac{c_1c_3 \dotsc c_{2n+1}}{2^{n+1}\phi}\cdot \frac{c_1c_3 \dotsc c_{2n+1}}{2^{n+1}\phi}} \\ & \leqslant 
2^{2n+2}\phi^2 \cdot 2 \cdot D^{2n+2} \cdot  \frac{3c_1c_3\dotsc c_{2n+1}\cdot c_2c_4 \dotsc c_{2n}}{(c_1c_3\dotsc c_{2n+1})^2} \\ & \leqslant 
2^{2n+3}\phi^2 \cdot 3 \cdot D^{2n+2} \cdot  \frac{c_2c_4 \dotsc c_{2n}}{c_1c_3\dotsc c_{2n+1}}.
\end{split}
\end{equation}
Letting $D' =  2^4 \phi \cdot 3 \cdot D^{2}$ concludes the desired inequalities.
By \eqnref{eq: condition on r_n}, we have $\frac{c_{n}}{c_{n+1}} \to 0$ as $n\to \infty$. Hence for sufficiently large $n$ the upper bounds for $1- \cos \angle (P_1 P_3 \dotsc P_{2n+1} (e_i), P_1 P_3 \dotsc P_{2n+1} (e_j))$ decrease at least exponentially with $n$, therefore $\angle (P_1 P_3 \dotsc P_{2n+1} (e_i), P_1 P_3 \dotsc P_{2n+1} (e_j)) \to 0$ as $n \to \infty$.
\end{proof}

\begin{proposition}
The measured laminations $\lambda_{e}, \lambda_{o}$ are ergodic. Further, any transverse measure on $\lambda$ is a linear combination of $\lambda_{e}$ and $\lambda_{o}$.
\end{proposition}
\begin{proof}



Let $\Delta = \{ x_1e_1 + \dotsc + x_9 e_9  \, | \, \sum_{i=1}^9 x_i = 1 \} \subset V(T)$ be the standard unit simplex. Notice that the set $P_1P_3\dotsc P_{2n-1} (V(T)) \, \cap \, \Delta$ is the convex hull of the points $ \text{span}(P_1P_3\dotsc P_{2n-1}(e_i)) \cap \Delta$ for $ i=1,\dotsc,9$. Since the infinite product $\prod_{i=1}^{\infty} \frac{P_{2i-1}}{c_{2i}}$ converges (see the proof of \clmref{claim: def even and odd}), the sequence of compact sets $P_1P_3\dotsc P_{2n-1} (V(T)) \, \cap \, \Delta$ converges in the Hausdorff metric on $V(T)$ as $n \to \infty$. It follows from \lemref{lemma: small angles} that the limiting set is either an interval or a point. If $\lambda$ admits at least three ergodic transverse measures up to scalar, then the set of points in $\Delta$ that correspond to measured laminations in $C(\lambda)$ contains a convex triangle. Let $R>0$ be the radius of the circumscribed circle of such triangle. For sufficiently large $n$ so that the Hausdorff distance from $P_1P_3\dotsc P_{2n-1} (V(T)) \, \cap \, \Delta$ to the limiting set is less than $R/8$, the set $P_1P_3\dotsc P_{2n-1} (V(T)) \, \cap \, \Delta$ cannot contain a 2-dimensional disk of radius $r$: it is immediate if the limiting set is a point and if the limiting set is an interval it follows from \lemref{lemma: Interval Neighborhood Lemma} since $R > 2\sqrt{2(8-1)}R/8$. Since the projective class of every non-zero measure in $C(\lambda)$ is represented in $P_1P_3\dotsc P_{2n-1} (V(T)) \, \cap \, \Delta$ by \clmref{claim: smaller cones carry}, we arrive at a contradiction. Hence $\lambda$ admits at most two ergodic transverse measures up to scalar. Together with \propref{proposition: not uniquely ergodic} we obtain that $\lambda$ admits exactly two ergodic transverse measured up to scalar.

Let $\lambda_1, \lambda_2$ be ergodic transverse measures on $\lambda$ that are not multiples of each other. Then we can write $$\lambda_{e} = \alpha_{1} \lambda_1  + \alpha_{2} \lambda_2, \,\,\, \lambda_{o} = \beta_1 \lambda_1  + \beta_2 \lambda_2$$ for $\alpha_{1},\alpha_2,\beta_1,\beta_2 \geqslant 0$. Since $\lambda_{o} \neq 0$, at least one of the numbers $\beta_1, \beta_2$ is non-zero. Without loss of generality, assume that $\beta_1 \neq 0$. Since $\lambda_1, \lambda_2$ are filling, we can write
$$
\frac{i(\gamma_{2n+1},\lambda_{e})}{i(\gamma_{2n+1},\lambda_{o})} = \frac{\alpha_1 \cdot i(\gamma_{2n+1},\lambda_{1})+\alpha_2 \cdot i(\gamma_{2n+1},\lambda_{2})}{\beta_1 \cdot i(\gamma_{2n+1},\lambda_{1})+\beta_2 \cdot i(\gamma_{2n+1},\lambda_{2})} \leqslant  \frac{\alpha_1 \cdot i(\gamma_{2n+1},\lambda_{1})+\alpha_2 \cdot i(\gamma_{2n+1},\lambda_{2})}{\beta_1 \cdot i(\gamma_{2n+1},\lambda_{1})} = \frac{\alpha_1}{\beta_1} + \frac{\alpha_2}{\beta_1}\cdot\frac{i(\gamma_{2n+1},\lambda_{2})}{i(\gamma_{2n+1},\lambda_{1})}.
$$
Since by \clmref{claim: even and odd}, $\frac{i(\gamma_{2n+1},\lambda_{e})}{i(\gamma_{2n+1},\lambda_{o})} \to \infty$ as $n \to \infty$, it follows that $\frac{i(\gamma_{2n+1},\lambda_{2})}{i(\gamma_{2n+1},\lambda_{1})} \to \infty$ as $n \to \infty$. If $\beta_2 \neq 0$ we have
$$
\frac{i(\gamma_{2n+1},\lambda_{e})}{i(\gamma_{2n+1},\lambda_{o})} = \frac{\alpha_1 \cdot i(\gamma_{2n+1},\lambda_{1})+\alpha_2 \cdot i(\gamma_{2n+1},\lambda_{2})}{\beta_1 \cdot i(\gamma_{2n+1},\lambda_{1})+\beta_2 \cdot i(\gamma_{2n+1},\lambda_{2})} \leqslant  \frac{\alpha_1 \cdot i(\gamma_{2n+1},\lambda_{1})+\alpha_2 \cdot i(\gamma_{2n+1},\lambda_{2})}{\beta_2 \cdot i(\gamma_{2n+1},\lambda_{2})} =  \frac{\alpha_1}{\beta_2}\cdot\frac{i(\gamma_{2n+1},\lambda_{1})}{i(\gamma_{2n+1},\lambda_{2})} + \frac{\alpha_2}{\beta_2} .
$$
Letting $n \to \infty$, we get $\limsup_{n\to \infty}\frac{i(\gamma_{2n+1},\lambda_{e})}{i(\gamma_{2n+1},\lambda_{o})} \leqslant \frac{\alpha_2}{\beta_2}$, contradiction. Hence $\beta_2 = 0$. A similar argument using that $\frac{i(\gamma_{2n},\lambda_{o})}{i(\gamma_{2n},\lambda_{e})} \to \infty$ as $n \to \infty$ by \clmref{claim: even and odd} shows that one of the numbers $\alpha_1,\alpha_2$ is zero.
Therefore $\lambda_{e}, \lambda_{o}$ are ergodic transverse measures themselves, and the result follows.
\end{proof}

\section{Relative twisting bounds} 
\label{section: relative twisting bounds}

In this section we prove that the lamination $\lambda$ constructed in \secref{section: construction of the lamination} has uniformly bounded annular projection distances. To show this, we return to the sequence of curves $\{ \alpha_i \}$, defined in \secref{Subsection: alpha sequence}. First we show:


\begin{lemma}
\label{lemma: annular projections are small}
For every $i \geqslant 2$
$$
d_{\alpha_i} (\alpha_{i-2},\alpha_{i+2}) \leqslant 7.
$$
\end{lemma}
\begin{proof}
By \eqnref{Eq: no-interac-with-pA}, we have
\begin{equation}
\label{equation: twist 3}
d_{\alpha_i}(\alpha_{i-2},\alpha_{i+2}) =  d_{\alpha_2}(\alpha_0,\varphi_{r_{i-1}}\alpha_{3}) = d_{\alpha_2}(\alpha_0,\tau^{r_{i-1}}(\rho\alpha_3)).
\end{equation}
Choose a marked complete hyperbolic metric $X$ on $S$ of finite volume. Let $\widetilde{\alpha_2}$ and $\widetilde{\rho\alpha_3}$ be geodesic lifts of $\alpha_2$ and $\rho\alpha_3$ in the universal cover $\widetilde{X} \cong \HH^2$ intersecting at the point $O \in \HH^2$. Let $\delta_0, \delta_1$ be the curves on $S$ shown in \figref{figure with alphas}. Let $\widetilde{\delta_0}$ and $\widetilde{\delta'_0}$ be the geodesic lifts of $\delta_0$ in $\HH^2$ that intersect $\widetilde{\rho\alpha_3}$ at the points $A, B \in \HH^2$, respectively, such that the geodesic segment $[AB] \subset \HH^2$ contains the point $O$ and does not contain any other intersection points of lifts of $\delta_0$ with $\widetilde{\rho\alpha_3}$.
Let $p,q \in \partial \HH^2$ be the endpoints of $\widetilde{\delta_0}$ and let $r,s \in \partial \HH^2$ be the endpoints of $\widetilde{\delta'_0}$. Let $(pq) \subset \partial \HH^2$ be the open interval such that $r,s \notin (pq)$ and let $(rs) \subset \partial \HH^2$ be the open interval such that $p,q \notin (rs)$. See \figref{figure: universal cover} (left).

Let $\widetilde{H_{\delta_0}}: \HH^2 \to \HH^2$ be a lift of the half-twist $H_{\delta_0}$ such that $\widetilde{H_{\delta_0}}(O)=O$. Similarly, let $\widetilde{H^{-1}_{\delta_1}}: \HH^2 \to \HH^2$ be a lift of the half-twist $H^{-1}_{\delta_1}$ such that $\widetilde{H^{-1}_{\delta_1}}(O)=O$. Then the map $\widetilde{\tau}: \HH^2 \to \HH^2$ defined as $\widetilde{\tau} = \widetilde{H^{-1}_{\delta_1}} \circ \widetilde{H_{\delta_0}}$ is a lift of $\tau$. We prove that for every $n \in \NN$, the endpoints of the curve $\widetilde{\tau}^n(\widetilde{\rho\alpha_3})$ in $\partial \HH^2$ belong to $(pq)\cup(rs)$, from which lemma will follow as we now show. Denote by $g_n$ the geodesic joining the endpoints of $\widetilde{\tau}^n(\widetilde{\rho\alpha_3})$. Then since $i(\alpha_0, \delta_0)=0$, the set of geodesic lifts of $\alpha_0$ in $\HH^2$ that intersect both $\widetilde{\alpha_2}$ and $g_n$ coincides with the set of geodesic lifts of $\alpha_0$ in $\HH^2$ that intersect both $\widetilde{\alpha_2}$ and $\widetilde{\rho\alpha_3}$. Then the projections of $g_n$ and $\widetilde{\rho\alpha_3}$ to the annular cover that corresponds to the hyperbolic isometry with the axis $\widetilde{\alpha_2}$ and translation length $\ell_{\alpha_2}(X)$ intersect at most once. It follows that $|d_{\alpha_2}(\alpha_0,\tau^{n}(\rho\alpha_3)) - d_{\alpha_2}(\alpha_0,\rho\alpha_3)| \leqslant  4$ for every $n \in \NN$. Then 
by \lemref{lemma: subsurface projection upper bound} we have
$$
d_{\alpha_2}(\alpha_0,\tau^{r_{i-1}}(\rho\alpha_3)) \leqslant d_{\alpha_2}(\alpha_0,\rho\alpha_3) + 4 \leqslant (i(\alpha_0,\rho\alpha_3) + 1) + 4 = 7.
$$
Now we prove that for every $n \in \NN$, the endpoints of the curve $\widetilde{\tau}^n(\widetilde{\rho\alpha_3})$ in $\partial \HH^2$ belong to $(pq)\cup(rs)$. Let $\widetilde{\delta_1}$ be the geodesic lift of $\delta_1$ in $\HH^2$ that intersects $\widetilde{\delta'_0}$ at the point $C \in \HH^2$ such that the geodesic segment $[BC] \subset \HH^2$ does not contain any other intersection points of lifts of $\delta_1$ with $\widetilde{\delta'_0}$. Let $t,v \in \partial \HH^2$ be the endpoints of $\widetilde{\delta_1}$ and let $(tv) \subset \partial \HH^2$ be the open interval such that $r \in (tv)$. Let $(rv) \subset \partial \HH^2$ be the open interval such that $t \notin (rv)$. Let $w \in \partial \HH^2$ be the endpoint of $\widetilde{\rho\alpha_3}$ such that $w \in (rs)$. Let $(rw] \subset \partial \HH^2$ be the half-open interval such that $v \in (rw]$. See \figref{figure: universal cover} (right).

It follows from Proposition 2.1 in \cite{Sm01} that the boundary extension of $\widetilde{H_{\delta_0}}$ fixes $r,s \in \partial \HH^2$ and moves all points in $(rs)$ counterclockwise (Proposition 2.1 is about Dehn twists, but the argument applies to half-twists as well). Similarly, the boundary extension of $\widetilde{H^{-1}_{\delta_1}}$ fixes $t,v \in \partial \HH^2$ and moves all points in $(tv)$ clockwise. Further,
every point in $(rw]$ is either fixed under the boundary extension of $\widetilde{H^{-1}_{\delta_1}}$ (such as the point $v$) or moves clockwise (such as any point in $(rv)$). Since $i(\rho\alpha_3,\delta_1)=0$, no geodesic lift of $\delta_1$ intersects $\widetilde{\rho\alpha_3}$, therefore no point in $(rw]$ moves past $w$ under the boundary extension of $\widetilde{H^{-1}_{\delta_1}}$. It follows that the boundary extension of 
$\widetilde{\tau} = \widetilde{H^{-1}_{\delta_1}} \circ \widetilde{H_{\delta_0}}$ maps $(rw]$ to itself. Hence for every $n \in \NN$, the boundary extension of
$\widetilde{\tau}^n$ maps the point $w \in \partial \HH^2$ to a point in $(rs)$. By a similar argument, the other endpoint of $\widetilde{\rho\alpha_3}$ is mapped to a point in $(pq)$ under the boundary extension of 
$\widetilde{\tau}^n$ for every $n \in \NN$, which concludes the proof.
\begin{figure}[H]
  \includestandalone[width=0.35\textwidth]{tikz_bounded_twisting1}
    \hspace*{.5in}
  \includestandalone[width=0.35\textwidth]{tikz_bounded_twisting2}
    \caption{Left: lifts of the curves $\alpha_2, \rho\alpha_3, \delta_0$ in the universal cover. Right: lifts of the curves $\alpha_2, \rho\alpha_3, \delta_0, \delta_1$ in the universal cover.}
    \label{figure: universal cover}
\end{figure}
\end{proof}
Let $\mu_{1} = \{\alpha_{i_1},\alpha_{i_1+5}\}$ be a collection of curves on $S$, where $i_1$ is the constant from \lemref{Lemma: Filling pair in 5+ steps}. By \lemref{Lemma: Filling pair in 5+ steps} the collection of curves $\mu_{1}$ is a marking on $S$. We prove: 

\begin{proposition}
\label{prop: uniformly bounded annular projections}
There is a constant $E \in \NN$ such that the following holds. For every curve $\gamma$ on $S$ there is $j_{\gamma} \in \NN$ such that for all $j \geqslant j_{\gamma}$ the curve $\alpha_j$ intersects $\gamma$ essentially and $$d_{\gamma}(\mu_{1},\alpha_j) \leqslant E.$$
\end{proposition}

\begin{proof}
If the curve $\gamma$ intersects every curve $\alpha_{j}$ essentially for $j \geqslant i_1$, then by \corref{corollary: BGI} we have
$$
d_{\gamma}(\mu_{1}, \alpha_j) = \max \{ d_{\gamma}(\alpha_{i_1}, \alpha_j), d_{\gamma}(\alpha_{i_1+5}, \alpha_j) \}  \leqslant A
$$
for every $j \geqslant i_1$.
Otherwise, the curve $\gamma$ is disjoint from some curves in the sequence $\{\alpha_i \}_{i\geqslant i_1}$. If $\gamma$ is disjoint from $\alpha_{i_1}$, then by \lemref{Lemma: Filling pair in 5+ steps},\ $\gamma$ intersects every $\alpha_{j}$ essentially for $j \geqslant i_1+5$. Then by \corref{corollary: BGI}
$$
d_{\gamma}(\mu_{1}, \alpha_j) = d_{\gamma}(\alpha_{i_1+5}, \alpha_j) \leqslant A
$$
for every $j \geqslant i_1+5$. If $\gamma$ intersects $\alpha_{i_1}$ essentially, let $\ell > i_1$ be the smallest index so that $\gamma$ is disjoint from $\alpha_{\ell}$ and $r \geqslant \ell$ be the largest index so that $\gamma$ is disjoint from $\alpha_r$. By \lemref{Lemma: Filling pair in 5+ steps}, we have $r-\ell \leqslant 4$.  Let $j_{\gamma} = r+1$. Then for $j \geqslant j_{\gamma} =  r+1$ by the triangle inequality we have
\begin{equation}
\label{eq: relative twisting bounds, triangle inequality}
d_{\gamma}(\mu_{1},\alpha_j) \leqslant d_{\gamma}(\mu_{1},\alpha_{\ell-1}) + d_{\gamma}(\alpha_{\ell-1},\alpha_{r+1}) + d_{\gamma}(\alpha_{r+1}, \alpha_j).
\end{equation}
By \corref{corollary: BGI}, $d_{\gamma}(\alpha_{r+1}, \alpha_j) \leqslant A$. Next, we show that $d_{\gamma}(\mu_{1},\alpha_{\ell-1}) \leqslant \max\{A,4\} + d_{\gamma}(\alpha_{\ell-1},\alpha_{r+1})$, thus it will remain to find an upper bound for $d_{\gamma}(\alpha_{\ell-1},\alpha_{r+1})$. If $\gamma$ intersects $\alpha_{i_1}$ essentially and is disjoint from $\alpha_{i_1+5}$, then by \corref{corollary: BGI} $$d_{\gamma}(\mu_{1},\alpha_{\ell-1}) = d_{\gamma}(\alpha_{i_1},\alpha_{\ell-1}) \leqslant A.$$
Suppose that $\gamma$ intersects both $\alpha_{i_1}$ and $\alpha_{i_1+5}$ essentially. If $\ell - 1 \geqslant i_1+5$, then by \corref{corollary: BGI}
$$
d_{\gamma}(\mu_{1},\alpha_{\ell-1}) = \max \{ d_{\gamma}(\alpha_{i_1},\alpha_{\ell-1}), d_{\gamma}(\alpha_{i_1+5},\alpha_{\ell-1}) \} \leqslant A,
$$
If $\ell - 1 < i_1+5$ and $r+1 \leqslant i_1+5$, then by the triangle inequality and \corref{corollary: BGI}
$$
d_{\gamma}(\alpha_{i_1+5},\alpha_{\ell-1}) \leqslant d_{\gamma}(\alpha_{i_1+5},\alpha_{r+1}) + d_{\gamma}(\alpha_{r+1},\alpha_{\ell-1}) \leqslant A + d_{\gamma}(\alpha_{\ell-1},\alpha_{r+1}).
$$
Therefore we have
$$
d_{\gamma}(\mu_{1},\alpha_{\ell-1}) = \max \{ d_{\gamma}(\alpha_{i_1},\alpha_{\ell-1}), d_{\gamma}(\alpha_{i_1+5},\alpha_{\ell-1}) \} \leqslant \max \{ A, A + d_{\gamma}(\alpha_{\ell-1},\alpha_{r+1}) \} = A + d_{\gamma}(\alpha_{\ell-1},\alpha_{r+1}).
$$
If $\ell - 1 < i_1+5$ and $r+1 > i_1+5$, then since $\gamma$ intersects $\alpha_{i_1+5}$ essentially, we have $\ell < i_1+5$ and $r > i_1+5$. Then the curves in $\{\alpha_i\}_{i\geqslant i_1}$ that are disjoint from $\gamma$ are not consecutive. It follows from \corref{corollary: footprint on alphas} and \clmref{claim: almost filling in 3 steps} that either $i_1+5 = r-1$ or $i_1+5 = r-2$ and $\gamma$ intersects $\alpha_{r-1}$ essentially. In the first case, 
by \lemref{lemma: subsurface projection upper bound} and \eqnref{Eq: no-interac-with-pA} we have 
$$
d_{\gamma}(\alpha_{i_1+5},\alpha_{r+1}) = d_{\gamma}(\alpha_{r-1},\alpha_{r+1}) \leqslant i(\alpha_{r-1},\alpha_{r+1}) + 1 = i(\alpha_0, \alpha_2) + 1 = 3.
$$
In the second case, by the triangle inequality
$$
d_{\gamma}(\alpha_{i_1+5},\alpha_{r+1}) = d_{\gamma}(\alpha_{r-2},\alpha_{r+1}) \leqslant d_{\gamma}(\alpha_{r-2},\alpha_{r-1}) + d_{\gamma}(\alpha_{r-1},\alpha_{r+1}).
$$
Notice that $d_{\gamma}(\alpha_{r-2},\alpha_{r-1}) = 1$ since $\alpha_{r-2}$ and $\alpha_{r-1}$ are disjoint. Since $d_{\gamma}(\alpha_{r-1},\alpha_{r+1}) \leqslant 3$, we have $d_{\gamma}(\alpha_{i_1+5},\alpha_{r+1}) \leqslant 4$. We obtain that if $\ell - 1 < i_1+5$ and $r+1 > i_1+5$, then
\begin{equation}
\label{eq: relative twisting bounds, bound of a term}
\begin{split}
d_{\gamma}(\mu_{1},\alpha_{\ell-1}) & = \max \{ d_{\gamma}(\alpha_{i_1},\alpha_{\ell-1}), d_{\gamma}(\alpha_{i_1+5},\alpha_{\ell-1}) \} \\ & \leqslant \max \{ A , d_{\gamma}(\alpha_{i_1+5},\alpha_{r+1}) + d_{\gamma}(\alpha_{r+1},\alpha_{\ell-1}) \} \\ & \leqslant \max \{ A , 4 + d_{\gamma}(\alpha_{r+1},\alpha_{\ell-1}) \} \\ & \leqslant \max\{A,4\} + d_{\gamma}(\alpha_{\ell-1},\alpha_{r+1}).
\end{split}
\end{equation}
Now we find an upper bound for $d_{\gamma}(\alpha_{\ell-1},\alpha_{r+1})$. Depending on the value of $r-\ell$, we consider the following cases:

\textbf{Case: $r-\ell = 4.$} By \clmref{claim: almost filling in 4 steps}, we have $\gamma = \beta_{\ell}$. By \eqnref{Eq: no-interac-with-pA}, we have
$$
d_{\gamma}(\alpha_{\ell-1},\alpha_{r+1}) = d_{\beta_{\ell}} ( \alpha_{\ell-1}, \alpha_{\ell+5}) = d_{\beta_{0}} ( \varphi_{r_{\ell}}^{-1} (\alpha_0), \varphi_{r_{\ell+1}} \varphi_{r_{\ell+2}} (\alpha_3)).
$$
Let $\nu', \nu''$ denote the limits of the laminations $\varphi_{r_{\ell}}^{-1} (\alpha_0), \varphi_{r_{\ell+1}} \varphi_{r_{\ell+2}} (\alpha_3)$ in the Hausdorff topology as $\ell \to \infty$, respectively. Notice that $\nu'$ and $\nu''$ intersect $\beta_0$ essentially. Then by \lemref{Lemma: Convergence of projection distances}, there is $\ell_0 \in \NN$ such that for every $\ell \geqslant \ell_0$, we have
$$
d_{\beta_{0}} ( \varphi_{r_{\ell}}^{-1} (\alpha_0), \varphi_{r_{\ell+1}} \varphi_{r_{\ell+2}} (\alpha_3)) \eadd_{\,16} d_{\beta_{0}} (\nu', \nu'').
$$
Then we have 
\begin{equation}
\label{eq: relative twisting bounds, r-l = 4}
d_{\gamma}(\alpha_{\ell-1},\alpha_{r+1}) \leqslant \max \Big\{\max_{\ell < \ell_0}\big\{d_{\beta_{0}} ( \varphi_{r_{\ell}}^{-1} (\alpha_0), \varphi_{r_{\ell+1}} \varphi_{r_{\ell+2}} (\alpha_3))\big\}, d_{\beta_{0}} (\nu', \nu'') + 16\Big\}.
\end{equation}

\textbf{Case: $r-\ell = 3.$} By \clmref{claim: almost filling in 3 steps} and the triangle inequality we can write 
$$
d_{\gamma}(\alpha_{\ell-1},\alpha_{r+1}) = d_{\gamma}(\alpha_{\ell-1},\alpha_{\ell+4}) \leqslant d_{\gamma}(\alpha_{\ell-1},\alpha_{\ell+1}) + d_{\gamma}(\alpha_{\ell+1},\alpha_{\ell+2}) + d_{\gamma}(\alpha_{\ell+2},\alpha_{\ell+4}).
$$
Notice that $d_{\gamma}(\alpha_{\ell+1},\alpha_{\ell+2}) = 1$ since $\alpha_{\ell+1}$ and $\alpha_{\ell+2}$ are disjoint. By \lemref{lemma: subsurface projection upper bound} we also have
$$d_{\gamma}(\alpha_{\ell-1},\alpha_{\ell+1}) \leqslant i(\alpha_{\ell-1},\alpha_{\ell+1}) + 1 = i(\alpha_0, \alpha_2) + 1 = 3.$$
Similarly, $d_{\gamma}(\alpha_{\ell+2},\alpha_{\ell+4}) \leqslant 3$. Hence $d_{\gamma}(\alpha_{\ell-1},\alpha_{r+1}) \leqslant 7$.

\textbf{Case: $r-\ell = 2.$} If $\gamma$ is disjoint from $\alpha_{\ell}$ and $\alpha_{\ell+2}$, then one of the following holds: $\gamma = \beta_{\ell-1}, \gamma = \alpha_{\ell+1}, \gamma \subset Y_{\ell+1}$. Indeed, by applying the homeomorphism $\Phi_{\ell-1}^{-1}$ and by \eqnref{Eq: no-interac-with-pA} it is enough to consider the case $\ell=1$, which follows from \figref{figure with alphas}. If $\gamma = \beta_{\ell-1}$, then by \clmref{claim: almost filling in 4 steps} $\gamma$ is disjoint from $\alpha_{\ell+3}$, which is impossible. If $\gamma = \alpha_{\ell+1}$, then by \lemref{lemma: annular projections are small} we have
$$
d_{\gamma} (\alpha_{\ell-1}, \alpha_{r+1}) = d_{\alpha_{\ell+1}}(\alpha_{\ell-1}, \alpha_{\ell+3}) \leqslant 7. 
$$
If $\gamma \subset Y_{\ell+1}$, we have
$$
d_{\gamma} (\alpha_{\ell-1}, \alpha_{r+1}) = d_{\gamma}(\alpha_{\ell-1}, \alpha_{\ell+3}) = d_{\Phi^{-1}_{\ell-1}\gamma}(\alpha_0, \tau^{r_{\ell}}(\rho\alpha_3)),
$$
where $\Phi^{-1}_{\ell-1}\gamma$ is a curve in $Y_2$. 
Denote the curve $\Phi^{-1}_{\ell-1}\gamma$ by $\gamma'$.
Let $\nu_{\tau}$ denote the unstable lamination of $\tau$. Notice that $\nu_{\tau}$ intersects every curve in $Y_2$ essentially and that the curves $\tau^{n}(\rho\alpha_3)$ converge in the Hausdorff topology as $n \to \infty$ to a lamination that contains $\nu_{\tau}$. Then for sufficiently large $n \in \NN$ so that \lemref{Lemma: Convergence of projection distances} applies and the curve $\tau^{n}(\rho\alpha_3)$ intersects $\gamma'$ essentially, we have
$$
d_{\gamma'}(\alpha_0, \tau^{n}(\rho\alpha_3)) \eadd_{\,9} d_{\gamma'}(\alpha_0, \nu_{\tau}).
$$
By \propref{Prop: pA}, we have $d_{\gamma'}(\alpha_0, \nu_{\tau}) \leqslant C_{\tau, \alpha_0}$. By the triangle inequality, we have
$$
d_{\gamma'}(\alpha_0, \tau^{r_{\ell}}(\rho\alpha_3)) \leqslant d_{\gamma'}(\alpha_0, \tau^{n}(\rho\alpha_3)) + d_{\gamma'}(\tau^{n}(\rho\alpha_3), \tau^{r_{\ell}}(\rho\alpha_3)) \leqslant (C_{\tau, \alpha_0} + 9) + d_{\tau^{-r_{\ell}}(\gamma')}(\tau^{n-r_{\ell}}(\rho\alpha_3), \rho\alpha_3),
$$
where $\tau^{-r_{\ell}}(\gamma')$ is a curve in $Y_2$. Denote the curve $\tau^{-r_{\ell}}(\gamma')$ by $\gamma''$. Since $r_{\ell} \in \NN$ is fixed, for sufficiently large $n \in \NN$ so that \lemref{Lemma: Convergence of projection distances} applies, together with \propref{Prop: pA} we have
$$
d_{\gamma''}(\tau^{n-r_{\ell}}(\rho\alpha_3), \rho\alpha_3) \eadd_{\,9} d_{\gamma''}(\nu_{\tau},\rho\alpha_3) \leqslant C_{\tau, \rho\alpha_3}.  
$$
Therefore we have
\begin{equation}
\label{eq: relative twisting bounds, r-l = 2}
d_{\gamma} (\alpha_{\ell-1}, \alpha_{r+1}) \leqslant C_{\tau, \alpha_0} + C_{\tau, \rho\alpha_3} + 18.
\end{equation}

\textbf{Case: $r-\ell = 1.$} This case is impossible by \clmref{claim: no two consecutive curves}.

\textbf{Case: $r-\ell = 0.$} By \lemref{lemma: subsurface projection upper bound} we have
$$
d_{\gamma}(\alpha_{\ell-1},\alpha_{r+1}) = d_{\gamma}(\alpha_{\ell-1},\alpha_{\ell+1}) \leqslant i(\alpha_{\ell-1},\alpha_{\ell+1}) + 1  = i(\alpha_0,\alpha_2) + 1 = 3.
$$

Finally, according to \eqnref{eq: relative twisting bounds, triangle inequality}, \eqnref{eq: relative twisting bounds, bound of a term}, \eqnref{eq: relative twisting bounds, r-l = 4}, \eqnref{eq: relative twisting bounds, r-l = 2}, if we let 
$$
E = \max\{A,4\} + 2 \cdot \max\bigg\{ \max \Big\{\max_{\ell < \ell_0}\big\{d_{\beta_{0}} ( \varphi_{r_{\ell}}^{-1} (\alpha_0), \varphi_{r_{\ell+1}} \varphi_{r_{\ell+2}} (\alpha_3))\big\}, d_{\beta_{0}} (\nu', \nu'') + 16\Big\}, C_{\tau, \alpha_0} + C_{\tau, \rho\alpha_3} + 18 \bigg\} + A,
$$
then $d_{\gamma}(\mu_1,\alpha_j) \leqslant E$ for every $j \geqslant j_{\gamma} = r+1$, which concludes the proof.
\end{proof}

In the following corollary, $\lambda$ is the non-uniquely ergodic ending lamination on $S$ constructed in \secref{section: construction of the lamination}.
\begin{corollary}
\label{corollary: uniformly bounded annular projection distances}
There is a constant $E' \in \NN$ such that $d_{\gamma}(\mu_1,\lambda) \leqslant E'$ for all curves $\gamma$ on $S$.
\end{corollary}
\begin{proof}
By \corref{corollary: ending lamination}, there is a subsequence of $\{\alpha_i\}$ that converges in the Hausdorff topology on $\calG \calL(S)$ to a geodesic lamination $\lambda'$ that contains $\lambda$. Taking an index $i \in \NN$ in the subsequence sufficiently large so that \lemref{Lemma: Convergence of projection distances} applies for the annular subsurface of a curve $\gamma$ on $S$ and so that $\alpha_i$ intersects $\gamma$ essentially, we obtain
$$
d_{\gamma}(\mu_1,\alpha_i) \eadd_{\,8} d_{\gamma}(\mu_1,\lambda').
$$
Since $\lambda \subset \lambda'$, we have $d_{\gamma}(\mu_1,\lambda) \leqslant d_{\gamma}(\mu_1,\lambda')$. Taking $i \in \NN$ sufficiently large so that \propref{prop: uniformly bounded annular projections} applies as well, we have 
$$
d_{\gamma}(\mu_1,\lambda) \leqslant d_{\gamma}(\mu_1,\lambda') \leqslant d_{\gamma}(\mu_1,\alpha_i) + 8 \leqslant E+8.
$$
Letting $E' = E+8$ concludes the proof.
\end{proof}
We remark that not all projection distances for $\lambda$ are uniformly bounded. We prove the following:
\begin{claim}
\label{claim: some projections are large}
Let $\nu$ be a minimal, filling geodesic lamination on $S$ such that $d_{Y}(\mu_1 ,\nu) \leqslant G$ for some constant $G > 0$ and all subsurfaces $Y \subset S$. Then
$$
d_{Y_i}(\nu, \lambda) \geqslant c \, r_{i-1} - G - 18
$$
for all $i \geqslant i_1+2$.
\end{claim}
\begin{proof}
By \lemref{Lemma: Subsurface projections are big}, we have $$d_{Y_i}(\mu_{1}, \alpha_j) \geqslant c \, r_{i-1}-9$$ 
for all $i \geqslant i_1+2$ and $j \geqslant i+2$. By an argument, similar to the one in \corref{corollary: uniformly bounded annular projection distances}, we have $$d_{Y_i}(\mu_{1}, \lambda) \geqslant (c \, r_{i-1}-9)-9$$ 
for all $i \geqslant i_1+2$. Then by the triangle inequality we have 
$$d_{Y_i}(\nu, \lambda) \geqslant d_{Y_i}(\mu_{1},\lambda) - d_{Y_i}(\mu_{1}, \nu) \geqslant c \, r_{i-1}-18 - G$$ for all $i \geqslant i_1+2$.
\end{proof}
We obtain the following corollary which is in contrast with \thmref{theorem: main theorem}.
\begin{corollary}
Suppose $X_t$ is a \Teich geodesic such that the support of the lamination that corresponds to its vertical foliation contains the support of $\lambda$ constructed in \secref{section: construction of the lamination}, and such that the support of the lamination that corresponds to its horizontal foliation contains the support of $\nu$ as in \clmref{claim: some projections are large}. Then for all sufficiently large $i \in \NN$, the minimal length $\ell_{\alpha_i}$ of the curve $\alpha_i$ along $X_t$ satisfies:
$$
\ell_{\alpha_i} \lmul \frac{1}{r_{i-1}}.
$$
\end{corollary}
\begin{proof}
Since the sequence $\{r_n\}$ is strictly increasing, we can choose $i_G \geqslant i_1+2$ such that $c \, r_{i-1} - G - 18 \geqslant \frac{c}{2}r_{i-1}$ for all $i \geqslant i_G$. Then the statement follows from \clmref{claim: some projections are large} and Theorem 6.1 in \cite{Raf05}. In particular, $X_t$ does not stay in the thick part of the \Teich space. Moreover, it follows from \thmref{Theorem: Masur's criterion in Zorich} that $X_t$ diverges in the moduli space as $t \to \infty$.
\end{proof}

\section{Geodesics in the thick part}
\label{section: geodesic}
In this section we prove \thmref{theorem: main theorem}. First, we prove some technical lemmas.


\begin{lemma} 
\label{lemma: short curves and convergent sequence in Teich space} 
Let $X_n \in \calT(S)$ be a sequence in \Teich space converging to $[\xi]$ in the Thurston boundary, and let $\eta_n$ be a curve on $S$ such that $\ell_{\eta_n}(X_n) \leqslant C$ for some $C > 0$. If $[\eta]$ is a limit point of the sequence $[\eta_n]$ in the Thurston boundary, then $i(\xi, \eta)=0$.
\end{lemma}
\begin{proof}
By definition, there is a sequence $\{a_n\}$ of positive numbers, such that $a_n X_n \to \xi$ as geodesic currents. We have (Prop. 15 in \cite{Bon88}):
$$
i(a_n X_n, a_n X_n) = a_n^2 \, i(X_n,X_n) = a_n^2 \pi^2 |\chi(S)|.
$$
By the continuity of the intersection number, $i(a_n X_n, a_n X_n) \to i(\xi,\xi) = 0$, since $\xi \in \ML(S)$. Hence $a_n^2 \to 0$, and in particular, $a_n \to 0$.
By definition, there is a sequence $\{b_n\}$ of non-negative numbers, such that $b_n \eta_n \to \eta$ as geodesic currents. Let $\gamma$ be a filling collection of curves on $S$, then $i(\gamma, b_n \eta_n) = b_n i(\gamma,\eta_n) \geqslant b_n$. We also have $i(\gamma, b_n \eta_n) \to i(\gamma,\eta) < \infty$. Hence the sequence $\{b_n\}$ is bounded from above, so suppose $b_n \leqslant B$ for some $B > 0$. Then
$$i(a_n X_n, b_n \eta_{n}) = a_{n} b_n \ell_{\eta_n}(X_{n}) \leqslant a_{n} BC.$$
Since $i(a_n X_n, b_n \eta_{n}) \to i(\xi, \eta)$, we obtain $i(\xi, \eta)=0$.
\end{proof}

Let $B(S)$ be a Bers constant of $S$. We prove:
\begin{lemma}
\label{lemma: convergence of the relative twisting} 
Let $X_n, Y_n \in \calT(S)$ be sequences in \Teich space converging to $[\xi]$ and $[\zeta]$ in the Thurston boundary, respectively. Suppose that the supports of $\xi$ and $\zeta$ are minimal and filling. If $\alpha$ is a curve on $S$ such that $\ell_{\alpha}(X_n), \ell_{\alpha}(Y_n) > B(S)$ for all $n \in \NN$, then
$$
d_{\alpha}(X_n,Y_n) \, \eadd \, d_{\alpha}(\xi, \zeta)
$$
for infinitely many $n \in \NN$.
\end{lemma} 

\begin{proof}
It follows from the definition of a Bers constant that for every $n \in \NN$ there are curves $\eta_n$ and $\nu_n$ on $S$ that intersect $\alpha$ essentially such that $\ell_{\eta_n}(X_n), \ell_{\eta_n}(X_n) \leqslant B(S)$. By the triangle inequality we have
$$
d_{\alpha}(X_n,Y_n) \leqslant d_{\alpha}(X_n,\eta_n) + d_{\alpha}(\eta_n,\nu_n) + d_{\alpha}(\nu_n,Y_n).
$$
Similarly,
$$
d_{\alpha}(\eta_n,\nu_n) \leqslant d_{\alpha}(\eta_n,X_n) + d_{\alpha}(X_n,Y_n) + d_{\alpha}(Y_n,\nu_n).
$$
Hence $|d_{\alpha}(X_n,Y_n) - d_{\alpha}(\eta_n,\nu_n)| \leqslant d_{\alpha}(X_n,\eta_n)+d_{\alpha}(Y_n,\nu_n)$. It is sufficient to show that $d_{\alpha}(X_n,\eta_n),d_{\alpha}(Y_n,\nu_n) \eadd 0$ and that $d_{\alpha}(\eta_n,\nu_n) \eadd  d_{\alpha}(\xi, \zeta)$ for infinitely many $n \in \NN$.

We show that the relative twisting coefficients $d_{\alpha}(X_n, \eta_n)$ are uniformly bounded, the case of $d_{\alpha}(Y_n,\nu_n)$ is identical. Let $\ell_n = \ell_{\alpha}(X_n)$. By the Collar Lemma (\cite{FM12}, Section 13.5), the $\omega_n$-neighborhood (collar) of the geodesic representative of $\alpha$ in $X_n$ for $\omega_n = \arcsinh\left(\frac{1}{\sinh(\ell_n/2)}\right)$ is embedded in $X_n$. Consider an arc $\widehat{\eta}_n$ of the geodesic representative of $\eta_n$ inside the collar of $\alpha$ in $X_n$ with one endpoint on $\alpha$ and the other endpoint on the boundary of the neighborhood. Since the collar is embedded, the length of $\widehat{\eta}_n$ is at most $B(S)$. From the trigonometry of right triangles,
we find a lower bound on the angle $\delta_n$ that $\widehat{\eta}_n$ makes with $\alpha$ in $X_n$: 
$$
\sin \delta_n \geqslant \frac{\sinh{\omega_n}}{\sinh B(S)}.
$$
Denote by $L_n$ the length of the orthogonal projection of a lift of $\eta_n$ on a lift of $\alpha$ in the universal cover of $X_n$ that intersect at the angle $\delta_n$. Then from the angle of parallelism formula, we have $\cosh\frac{L_n}{2}\sin \delta_n = 1.$ Since $\sinh x \leqslant e^x/2$ and $\arccosh x \leqslant \ln 2x$ for $x>0$, we find:
$$
    L_n \leqslant 2\arccosh \frac{\sinh B(S)}{\sinh \omega_n} = 2\arccosh (\sinh B(S) \sinh (\ell_n/2)) \leqslant 2 \ln (\sinh B(S)e^{\ell_n/2}) \leqslant \ell_n + 2B(S)-2\ln2 < 3\ell_n.
$$
We estimate the relative twisting coefficients (see \cite{Min96}, Section 3):
$$
d_{\alpha}(X_n, \eta_n) \eadd_{\,2} \frac{L_n}{\ell_n} \eadd_{\,3} 0.
$$
We show that $d_{\alpha}(\eta_n,\nu_n) \eadd  d_{\alpha}(\xi, \zeta)$ for infinitely many $n \in \NN$. Let $[\eta] \in \PP \ML(S)$ be the limit of a subsequence of $[\eta_n]$. By \lemref{lemma: short curves and convergent sequence in Teich space}, $i(\xi, \eta)=0$. Since $\xi$ is minimal and filling, we have $\text{supp}(\xi) = \text{supp}(\eta)$, in particular $\eta$ intersects $\alpha$ essentially and $d_{\alpha}(\xi, \zeta) = d_{\alpha}(\eta, \zeta)$. Let $\eta'$ be the limit of a further subsequence of $\{ \eta_n \}$ in $\calG\calL(S)$. Then $\text{supp}(\eta) \subset \text{supp}(\eta')$, hence $d_{\alpha}(\eta, \zeta) \eadd_{\,1} d_{\alpha}(\eta', \zeta)$. By \lemref{Lemma: Convergence of projection distances}, $d_{\alpha}(\eta', \zeta) \eadd d_{\alpha}(\eta_n, \zeta)$ for infinitely many $n \in \NN$. By a similar argument for $\{\nu_n\}$, we have $d_{\alpha}(\eta_n, \zeta) \eadd d_{\alpha}(\eta_n, \nu_n)$, hence $d_{\alpha}(\xi, \zeta) \eadd d_{\alpha}(\eta_n, \nu_n) $ for infinitely many $n \in \NN$, which proves the lemma.
\end{proof}

Together with \thmref{Theorem: Short curves along a Thurston geodesic}, we obtain the following corollary:
\begin{corollary}[Bounded annular combinatorics implies cobounded]
\label{corollary: bounded annular combinatorics implies cobounded}
Let $\calG(t), \, t \in \RR$ be a stretch path in $\calT(S)$ with the horocyclic foliation $[\xi]$
such that $\calG(t) \to [\zeta] \in \PP \ML(S)$ as $t \to -\infty$. Suppose that the supports of $\xi$ and $\zeta$ are minimal and filling. If there exists a number $K \in \NN$ such that $d_{\alpha}(\xi,\zeta) \leqslant K$ for all curves $\alpha$ on $S$, then there exists $\varepsilon(K) > 0$ such that $\calG(t)$ lies in the thick part $\calT_{\varepsilon}(S)$ for all $t \in \RR$.
\end{corollary}
\begin{proof}
Suppose that there is a curve $\alpha$ on $S$ that gets shorter than $\varepsilon_0$ along the geodesic $\calG(t)$, where $\varepsilon_0 > 0$ is the constant in the statement of \thmref{Theorem: Short curves along a Thurston geodesic} --- otherwise there is nothing to prove. Since $\calG(t)$ is a stretch path,
\thmref{Theorem: Short curves along a Thurston geodesic} is applicable.
Let $[a,b]$ be the $\varepsilon_0$-active interval for $\alpha$. Indeed, this interval in bounded: for example, if there is a sequence $t_i \to \infty$ such that $\ell_{\alpha}(\calG(t_i)) \leqslant \varepsilon_0$, then by \lemref{lemma: short curves and convergent sequence in Teich space} we have $i(\alpha, \xi)=0$, which is impossible since $\xi$ is minimal and filling. 
By a similar argument it can be shown that there are infinitely many numbers $m \in \NN$ such that $\ell_{\alpha}(\calG(-m)) > B(S)$. By choosing large enough $n$, so that the interval $[-n,n]$ contains the interval $[a,b]$ and \lemref{lemma: convergence of the relative twisting} applies for $X_n = \calG(-n), Y_n=\calG(n)$, we conclude by combining \thmref{theorem: stretch paths}, \thmref{Theorem: Short curves along a Thurston geodesic} with the condition $d_{\alpha}(\xi,\zeta) \leqslant K$ that there is a lower bound on the minimal length of $\alpha$ along $\calG(t)$ that depends only on $K$.
\end{proof}
Finally, we prove our main result.
\begin{proof}[Proof of \thmref{theorem: main theorem}]
Let $[\lambda]$ be the projective class of some non-zero transverse measure on the non-uniquely ergodic ending lamination $\lambda$ constructed in \secref{section: construction of the lamination}. Let $\nu$ be the unstable or stable lamination of a pseudo-Anosov $\Psi$ map on $S$, and let $\widehat{\nu}$ be a maximal lamination on $S$ obtained from $\nu$ by adding finitely many leaves. Consider the projective measured foliation on $S$ that corresponds to $[\lambda]$ and that is standard near the cusps; we also denote it by $[\lambda]$. Since $\nu$ is minimal, filling and uniquely ergodic, the set of projective measured foliations transverse to $\widehat{\nu}$ 
contains $[\lambda]$. Thus there is a point $X \in \calT(S)$ such that $[\calF_{\widehat{\nu}}(X)] = [\lambda]$ (see \secref{subsecion: Thurston metric}). Since $\text{stump}(\widehat{\nu}) = \nu$, by \thmref{theorem: stretch paths} the stretch path $\text{stretch}(X, \widehat{\nu}, t)$ converges to $[\lambda]$ as $t \to \infty$ and to $[\nu]$ as $t \to -\infty$.

By \corref{corollary: bounded annular combinatorics implies cobounded}, to prove that $\text{stretch}(X, \widehat{\nu}, t)$ stays in the thick part, it is sufficient to show that the relative twisting coefficients $d_{\alpha}(\nu,\lambda)$ are uniformly bounded for all curves $\alpha$ on $S$.
Let $\mu_1$ be the marking on $S$ from \propref{prop: uniformly bounded annular projections}. By the triangle inequality, we have
\begin{equation}
\label{eq: last equation}
d_{\alpha}(\nu,\lambda) \leqslant d_{\alpha}(\nu,\mu_1) +
d_{\alpha}(\mu_1,\lambda).
\end{equation}
By \propref{Prop: pA} and \corref{corollary: uniformly bounded annular projection distances}, we have
$$
d_{\alpha}(\nu,\lambda) \leqslant C_{\Psi, \mu_1} + E',
$$
which completes the proof.
\end{proof}



\section{Appendix}
\label{Section: Appendix}

\subsection{Convergence Lemma}

Let $\|\cdot\|$ denote the operator norm. Then $\|Y\|\geqslant 1$ for any nontrivial idempotent matrix $Y$.
The following lemma is a slight improvement over Lemma 11.1 in \cite{BGT22}.

\begin{lemma}
\label{Lemma: convergence of matrices}
Let $Y$ be an idempotent matrix and let $\{ \Delta_i \}_{i=1}^{\infty}$ be a sequence of matrices such that $\sum_{i=1}^{\infty} \| \Delta_i \| < \infty$. Let $\varepsilon_j = \sum_{i=j}^{\infty} \| \Delta_i \|$ for $j \in \NN$. Then there is $j_0 \in \NN$ such that for every $j \geqslant j_0$, the infinite product 
$$\prod_{i=j}^\infty \,\, (Y+\Delta_i)$$
converges to a matrix $X_j$ with $\| X_j - Y \| \leqslant 2 \varepsilon_{j} \|Y\|^2$. Moreover, the kernel of $Y$ is contained in the kernel of $X_j$.
\end{lemma}

\begin{proof}
Let $j_0\in \NN$ be such that $\varepsilon_{j_0} \leqslant \frac{1}{2\|Y\|}$. Now fix some $j \geqslant j_0$. For $k \geqslant j$, write 
$$
Y + \Sigma_{k} = \prod_{i=j}^{k} \,\, (Y+\Delta_i).
$$
Then $(Y+\Sigma_k)(Y+\Delta_{k+1})=Y+\Sigma_{k+1}$ and since $Y^2=Y$
    it follows that
    \begin{equation}\label{Eq: Matrix mambo jambo 1} 
    \Sigma_{k+1}=\Sigma_k Y + Y \Delta_{k+1} + \Sigma_k \Delta_{k+1}.
    \end{equation}
Multiplying on the right by $Y$ and using $Y^2=Y$ we get
    $$\Sigma_{k+1}Y=\Sigma_kY+Y\Delta_{k+1}Y+\Sigma_k\Delta_{k+1}Y$$
and applying the norm    
$$
\| \Sigma_{k+1}Y \| \leqslant \| \Sigma_kY \| + \|\Delta_{k+1}\| \cdot \|Y\|^2 + \|\Sigma_k\| \cdot \| \Delta_{k+1} \| \cdot \|Y\|.
$$
For $m > j$, by applying these inequalities for $k=j, \dotsc, m-1$ we get 
$$
\| \Sigma_{m}Y \| \leqslant \| \Sigma_j Y \| + \biggl(\|\Delta_{j+1}\| + \dotsc + \|\Delta_{m}\| \biggr) \cdot \|Y\|^2 + \biggl(\|\Sigma_j\| \cdot \| \Delta_{j+1}\| + \dotsc + \|\Sigma_{m-1}\| \cdot \| \Delta_{m} \| \biggr) \cdot \|Y\|.
$$
Since $\| Y \| \leqslant \| Y \|^2$ and using $\Sigma_j = \Delta_j$ we can write 
$$
\| \Sigma_{m}Y \| \leqslant \biggl( \|\Delta_{j}\| + \dotsc + \|\Delta_{m}\| \biggr) \cdot \|Y\|^2 + \biggl( \|\Sigma_j\| \cdot \| \Delta_{j+1}\| + \dotsc + \|\Sigma_{m-1}\| \cdot \| \Delta_{m} \| \biggr) \cdot \|Y\|.
$$

Putting this together with \eqnref{Eq: Matrix mambo jambo 1} and using $\| Y \|^2 \geqslant\| Y \|, \| Y \| \geqslant 1$, we get 

\begin{equation}
\label{Eq: Matrix mambo jambo 2} 
\begin{split}    
\| \Sigma_{k+1} \| & \leqslant \| \Sigma_k Y \| + \| \Delta_{k+1} \| \cdot \| Y \| + \| \Sigma_k \| \cdot \| \Delta_{k+1} \| \\ & \leqslant \biggl( \|\Delta_{j}\| + \dotsc + \|\Delta_{k+1}\| \biggr) \cdot \|Y\|^2 + \biggl( \|\Sigma_j\| \cdot \| \Delta_{j+1}\| + \dotsc + \|\Sigma_{k}\| \cdot \| \Delta_{k+1} \| \biggr) \cdot \|Y\| \\
& \leqslant \varepsilon_j \|Y\|^2 + \biggl( \|\Sigma_j\| \cdot \| \Delta_{j+1}\| + \dotsc + \|\Sigma_{k}\| \cdot \| \Delta_{k+1} \| \biggr) \cdot \|Y\|.
\end{split}
\end{equation}

Now we show by induction that $\| \Sigma_k \| \leqslant \frac{\varepsilon_j \|Y\|^2}{1-\varepsilon_{j+1}\|Y\|}$ for all $k \geqslant j$. 

\textbf{Base: $k=j$}. Since $\Sigma_j = \Delta_j$, we have $\|\Sigma_j\| = \|\Delta_j\| = \varepsilon_j - \varepsilon_{j+1}$. Next, using $\|Y\|^2 \geqslant 1$ we trivially have
$$
(\varepsilon_j - \varepsilon_{j+1}) (1-\varepsilon_{j+1}\|Y\|) \leqslant \varepsilon_j - \varepsilon_{j+1} \leqslant \varepsilon_j \leqslant \varepsilon_j \|Y\|^2.
$$
By the choice of $j_0$, we have $1-\varepsilon_{j+1}\|Y\| >0$, hence by dividing both sides by $(1-\varepsilon_{j+1}\|Y\|) $, we obtain
$$
\|\Sigma_j\| \leqslant \frac{\varepsilon_j \|Y\|^2}{1-\varepsilon_{j+1}\|Y\|}
$$
as desired.

\textbf{Step.} By \eqnref{Eq: Matrix mambo jambo 2}, we have

$$
\| \Sigma_{k+1} \| \leqslant  \varepsilon_j \|Y\|^2 + \frac{\varepsilon_j \|Y\|^2}{1-\varepsilon_{j+1}\|Y\|} \biggl( \| \Delta_{j+1}\| + \dotsc + \| \Delta_{k+1} \| \biggr) \cdot \| Y \| \leqslant \varepsilon_j \|Y\|^2 + \frac{\varepsilon_j \|Y\|^2}{1-\varepsilon_{j+1}\|Y\|} \cdot \varepsilon_{j+1} \|Y\|  = \frac{\varepsilon_j \|Y\|^2}{1-\varepsilon_{j+1}\|Y\|}.
$$

By the choice of $j_0$, we also have $\frac{\varepsilon_j \|Y\|^2}{1-\varepsilon_{j+1}\|Y\|} \leqslant 2 \varepsilon_{j} \|Y\|^2$. This shows that 
\begin{equation}
\label{eq: general matrix norm bound}
    \| \Sigma_k \| \leqslant 2\varepsilon_j \|Y\|^2.
\end{equation} 
It also follows that $\| X_j - Y \| \leqslant 2 \varepsilon_{j} \|Y\|^2$ if we assume the convergence.

To prove the convergence, we show that the partial products form a Cauchy sequence. For $j < k < m,$
$$
 \prod_{i=j}^m \,\, (Y+\Delta_i) - \prod_{i=j}^k \,\, (Y+\Delta_i)=
      \prod_{i=j}^{k-1} (Y+\Delta_i) \bigg(\prod_{i=k}^m (Y+\Delta_i) -
      (Y+\Delta_k)\bigg)
$$
and applying the norm
$$
\left\lVert \prod_{i=j}^m \,\, (Y+\Delta_i) - \prod_{i=j}^k \,\, (Y+\Delta_i) \right\rVert 
\leqslant \left\lVert \prod_{i=j}^{k-1} (Y+\Delta_i) \right\rVert 
\Bigg(\left\lVert\prod_{i=k}^m (Y+\Delta_i) - Y \right\rVert + \|
\Delta_k \| \Bigg) \leqslant (\|Y\| + 2\varepsilon_{j}\|Y\|^2) (2\varepsilon_k \|Y\|^2+\|\Delta_k\|)
$$
which proves the sequence is Cauchy. 

For the last statement, let $v$ be a unit vector with $Yv = 0$. Then for $k \geqslant j$,
$$\| X_k v \| = \| (X_k-Y)v \| \leqslant \|X_k-Y\| \cdot \|v\| \leqslant 2\varepsilon_k \|Y \|^2.$$
Since $X_j = (Y+\Sigma_{k-1})X_k$, we have
$$
\|X_jv\| \leqslant \| (Y+\Sigma_{k-1}) \| \| X_k v \| \leqslant  (\|Y\| + 2\varepsilon_{j}\|Y\|^2) (2\varepsilon_k \|Y\|^2).
$$
Since this is true for all $k \geqslant j$, letting $k \to \infty$ yields $X_j v = 0$. 
\end{proof}

\subsection{Angle Estimate Lemma}
Let $V$ be an inner product space.
\begin{lemma}
\label{lemma: Angle Estimate Lemma}
Let $v_1,v_2,w_1,w_2 \in V$ be such that $\angle (v_1,v_2) = 0$. Then 
$$
1-\cos \angle (v_1+w_1,v_2+w_2) \leqslant \frac{2(\|v_1\| \cdot \|w_2\|+\|v_2\| \cdot \|w_1\|+\|w_1\| \cdot \|w_2\|)}{\|v_1\|\cdot\|v_2\|}.
$$
\end{lemma}

\begin{proof}
Writing the definition of the cosine of the angle, using the triangle inequality, the fact that $\langle v_1,v_2 \rangle = \|v_1\|\cdot\|v_2\|$ and that $\langle v,w \rangle \geqslant -\|v\|\cdot \|w\|$ for $v,w \in V$, we get
\begin{equation}
\label{eq: cos lower bound}
\begin{split}
\cos \angle (v_1+w_1,v_2+w_2) &= \frac{\langle v_1+w_1,v_2+w_2 \rangle}{\|v_1+w_1\| \cdot \|v_2+w_2\|} \geqslant \frac{\langle v_1+w_1,v_2+w_2 \rangle}{(\|v_1\|+\|w_1\|)(\|v_2\|+\|w_2\|)} \\ & = \frac{\langle v_1,v_2 \rangle + \langle v_1,w_2 \rangle+\langle v_2, w_1 \rangle+\langle w_1,w_2 \rangle }{(\|v_1\|+\|w_1\|)(\|v_2\|+\|w_2\|)} \\ 
& \geqslant \frac{\|v_1\|\cdot \|v_2\| - \| v_1\|\cdot \|w_2\| -  \| v_2 \| \cdot \|w_1\|  - \| w_1  \| \cdot \|w_2 \| }{(\|v_1\|+\|w_1\|)(\|v_1\|+\|w_1\|)}.
\end{split}
\end{equation}
Then by \eqnref{eq: cos lower bound} and since $\|w_1\|,\|w_2\|\geqslant0$,
\begin{equation}
\begin{split}
1-\cos \angle (v_1+w_1,v_2+w_2) & \leqslant \frac{2(\|v_1\| \cdot \|w_2\|+\|v_2\| \cdot \|w_1\|+\|w_1\| \cdot \|w_2\|) }{(\|v_1\|+\|w_1\|)(\|v_2\|+\|w_2\|)} \\ &\leqslant \frac{2(\|v_1\| \cdot \|w_2\|+\|v_2\| \cdot \|w_1\|+\|w_1\| \cdot \|w_2\|)}{\|v_1\|\cdot\|v_2\|}.
\end{split}
\end{equation}
\end{proof}

\subsection{Interval Neighborhood Lemma}

\begin{lemma}
\label{lemma: Interval Neighborhood Lemma}

Let $I \subset \RR^n, \, n\geqslant 2$ be a closed line segment. Let $I_r \subset \RR^n$ be the $r$-neighborhood of $I$ for $r>0$. Then for every $2$-dimensional closed disk $D_R \subset \RR^n$ of radius $R > 2\sqrt{2(n-1)} \, r$, $D_R \not\subset I_r$. 

\end{lemma}

\begin{proof}

Without loss of generality assume that $I = \{ (x_1, 0, \dotsc, 0) \,|\, -t \leqslant x_1 \leqslant t \}$ for some $t > 0$. Let $B_r = \{ (x_1, x_2, \dotsc, x_n) \,|\, -t-r \leqslant x_1 \leqslant t+r, \, -r \leqslant x_i \leqslant r, \, 2\leqslant i \leqslant n \}$. Notice that $ I_r \subset B_r$. We prove that $D_R \not\subset B_r$, hence $D_R \not\subset I_r$.

Assume on the contrary that $D_R \subset B_r$. Let $c \in D_R$ be the center of $D_R$ and $a,b \in \partial D_R$ be such that the vector $a-c$ is perpendicular to the vector $b-c$. Thus there are points $a,b,c \in B_r$ such that $\|a-c\| = R, \|b-c\| = R$ and $\langle a-c, b-c \rangle=0$. Let $v_i$ denote the $i$-th coordinate of a vector $v \in \RR^n$. Since $|(a-c)_i|, |(b-c)_i| \leqslant 2r$ for $2\leqslant i \leqslant n$, we have
$$
R^2 = \|a-c\|^2 \leqslant (a-c)_1^2 + (n-1)\cdot4r^2, \, R^2 = \|b-c\|^2 \leqslant (b-c)_1^2 + (n-1)\cdot4r^2.
$$
Hence $(a-c)_1^2 \geqslant R^2 - 4(n-1)r^2, \, (b-c)_1^2 \geqslant R^2 - 4(n-1)r^2$.
Then by the triangle inequality we have
$$
|\langle a-c, b-c \rangle| \geqslant |(a-c)_1|\cdot|(b-c)_1| - \sum_{i=2}^n |(a-c)_i|\cdot|(b-c)_i| \geqslant R^2 - 4(n-1)r^2 - 4(n-1)r^2 = R^2 - 8(n-1)r^2.
$$
Since $R > 2\sqrt{2(n-1)} \, r$, we have $|\langle a-c, b-c \rangle| > 0$, contradiction.
\end{proof}

\end{document}